\numberwithin{equation}{section}
\newtheorem{theorem}{Theorem}[section]
\newtheorem{corollary}{Corollary}[section]
\newtheorem{definition}{Definition}[section]
\newtheorem{lemma}{Lemma}[section]
\newtheorem{remark}{Remark}[section]
\newtheorem{example}{Example}[section]
\newcommand{\M}{{\mathcal M}}
\newcommand{\N}{{\mathcal N}}
\newcommand{\8}{\infty}
\newcommand{\el}{\ell}
\newcommand{\be}{\begin{eqnarray*}}
\newcommand{\ee}{\end{eqnarray*}}
\newcommand{\beq}{\begin{equation}}
\newcommand{\eeq}{\end{equation}}
\newcommand{\beqn}{\begin{equation*}}
\newcommand{\eeqn}{\end{equation*}}
\newcommand{\bs}{\begin{split}}
\newcommand{\es}{\end{split}}
\numberwithin{equation}{section}
\begin{document}

\title{Interpolation and $\Phi$-moment inequalities of noncommutative martingales}

\thanks{{\it 2000 Mathematics Subject Classification:} 46L53, 46L52, 60G42.}
\thanks{{\it Key words:} $\tau$-measurable operator, noncommutative martingale, interpolation, $\Phi$-moment martingale inequality, noncommutative Orlicz space.}

\author{Turdebek N. Bekjan}

\address{College of Mathematics and Systems Sciences, Xinjiang
University, \newblock Urumqi 830046, China}

\thanks{T.B is partially supported by NSFC grant No.10761009}

\author{Zeqian Chen}

\address{Wuhan Institute of Physics and Mathematics, Chinese
Academy of Sciences, 30 West District, Xiao-Hong-Shan, Wuhan 430071, China}

\thanks{Z.C is partially supported by NSFC grant No.10775175.}

\date{}
\maketitle

\markboth{T.N.Bekjan and Z. Chen}%
{$\Phi$-moment inequalities of noncommutative martingales}

\begin{abstract}
This paper is devoted to the study of $\Phi$-moment inequalities for noncommutative martingales. In particular, we prove the noncommutative $\Phi$-moment analogues of martingale transformations, Stein's inequalities, Khintchine's inequalities for Rademacher's random variables, and Burkholder-Gundy's inequalities. The key ingredient is a noncommutative version of Marcinkiewicz type interpolation theorem for Orlicz spaces which we establish in this paper.
\end{abstract}


\section{Introduction}\label{intro}

Given a probability space $(\Omega,\mathscr{F},P).$ Let $\{\mathscr{F}_{n}\}_{n\ge1}$ be a nondecreasing sequence of $\sigma$-subfields of $\mathscr{F}$ such that $\mathscr{F}=\vee\mathscr{F}_{n}$, and let $\Phi$ be an Orlicz function with $1<p_{\Phi}\leq q_{\Phi}<\infty.$ If $f=(f_{n})_{n\ge1}$ is a $L_{\Phi}$-bounded martingale, then
\begin{equation}\label{eq:PhiBG}
\int_{\Omega}\Phi \Big [ \Big ( \sum_{n = 1}^{\infty}|d f_n |^{2} \Big )^{\frac{1}{2}} \Big ] d P \approx
\sup_{n\ge1}\int_{\Omega}\Phi(|f_{n}|)dP.
\end{equation}
where $d f = ( d f_n )_{n \geq 1}$ is the martingale difference of $f$ and $``\approx"$ depends only on $\Phi.$ This result is the well-known Burkholder-Gundy inequality for convex powers $\Phi (t) = t^p$ (see \cite{BG}) and proved in the general setting of Orlicz functions by Burkholder-Davis-Gundy \cite{BDG}. In their remarkable paper \cite{PX1}, Pisier and Xu proved the noncommutative analogue of the Burkholder-Gundy inequality, which triggered a systematic research of noncommutative martingale inequalities (we refer to a recent book by Xu \cite{X2} for an up-to-date exposition of theory of noncommutative martingales). In this paper, we will extend their work to $\Phi$-moment versions, i.e., we will prove the noncommutative analogue of \eqref{eq:PhiBG}.

Let us briefly describe our $\Phi$-moment inequality. Let $\M$ be a finite von Neumann algebra with a normalized normal faithful trace $\tau,$ and $\{\M_n\}_{n\ge 0}$ be an increasing filtration of von Neumann subalgebras of $\M.$ Let $\Phi$ be an Orlicz function and $x=\{x_{n}\}_{n\geq 0}$ a noncommutative $L_{\Phi}$-martingale with respect to $\{\M_n\}_{n \ge 0}.$ Then our result reads as follows. If $1<p_{\Phi} \le q_{\Phi}<2,$ then
\begin {equation}\label{eq:PhiBG1}
\begin{split}
\tau \big ( \Phi [ |x|] \big ) \approx \inf \Big \{ \tau \Big ( \Phi \Big [ \Big ( \sum_{k= 0 }^{\infty} |dy_{k}|^{2} \Big )^{ \frac{1}{2}} \Big ] \Big ) + \tau \Big ( \Phi \Big [ \Big ( \sum_{k= 0 }^{\infty} |dz_{k}^{*}|^{2} \Big )^{ \frac{1}{2}} \Big ] \Big ) \Big \}
\end{split}
\end{equation}
where the infimum runs over all decomposition $x_{k}=y_{k}+z_{k}$
with $y_{k}$ in $\mathcal{H}_{C}^{\Phi}({\mathcal{M}})$ and $z_{k}$ in
$\mathcal{H}_{R}^{\Phi}({\mathcal{M}}),$ and if $2 <p_{\Phi} \le q_{\Phi}<\infty,$ then
\begin {equation}\label{eq:PhiBG2}
\tau \big ( \Phi [ |x|] \big ) \approx \max \Big \{ \tau \Big ( \Phi \Big [ \Big ( \sum_{k= 0 }^{\infty} |dx_{k}|^{2} \Big )^{ \frac{1}{2}} \Big ] \Big ),\; \tau \Big ( \Phi \Big [ \Big ( \sum_{k= 0 }^{\infty} |dx_{k}^{*}|^{2} \Big )^{ \frac{1}{2}} \Big ] \Big ) \Big \}.
\end{equation}
Here ``$\approx$" depends only on $\Phi.$ Note that the Orlicz norm version of noncommutative analogue of \eqref{eq:PhiBG} has been proved by the first named author \cite{B}. Evidently, the $\Phi$-moment inequalities imply the norm version.

One interesting feature of our result, similar to that of Pisier-Xu \cite{PX1}, is that the square function is defined differently (and it must be changed!) according to $q_{\Phi} <2$ or $p_{\Phi}>2.$ This surprising phenomenon was already discovered by F.Lust-Piquard in \cite{LP1, LP2} (see also \cite{LPP}) while establishing noncommutative versions of Khintchine's inequalities (see $\S 5$ also).

Stopping times and good-$\lambda$ techniques developed by Burkholder {\it etal} \cite{Burk} are two key ingredients in the proof of \eqref{eq:PhiBG}. Unfortunately, the concept of stopping times is, up to now, not well defined in the generic noncommutative setting (there are some works on this topic, see \cite{AC} and references therein). On the other hand, the noncommutative analogue of good-$\lambda$ inequalities seems open. Then, in order to prove the noncommutative $\Phi$-moment inequalities \eqref{eq:PhiBG1} and \eqref{eq:PhiBG2} we need new ideas.

The style of proof of \eqref{eq:PhiBG1} and \eqref{eq:PhiBG2} is via interpolation. Our key ingredient is a noncommutative analogue of Marcinkiewicz type interpolation theorem for Orlicz spaces, which we will prove in this paper. Recall that the first interpolation theorem concerning Orlicz spaces as intermediate spaces is due to Orlicz \cite{Or}. Subsequently, the classical Marcinkiewicz interpolation theorem was extended to include Orlicz spaces as interpolation classes by A.Zygmund, A.P.Calder\'{o}n, {\it et al.}, for references see \cite{M2} and therein.

Now, let us briefly explain our strategy. Firstly, we prove $\Phi$-moment versions of noncommutative martingale transforms and Stein's inequalities via interpolation. Then by interpolation again we prove $\Phi$-moment versions of noncommutative Khintchine's inequalities (this is the key point of the proof). Finally, by these $\Phi$-moment inequalities we deduce \eqref{eq:PhiBG1} and \eqref{eq:PhiBG2}. This argument seems new and that even in the classical case, it is simpler than all existing methods to the $\Phi$-moment inequalities of martingales.

The remainder of this paper is divided into six sections. In Section \ref{pre}, we present some preliminaries and notations on the noncommutative Orlicz spaces and Orlicz-Hardy spaces of noncommutative martingales. Then, a noncommutative analogue of Marcinkiewicz type interpolation theorem for Orlicz spaces is proved in Section \ref{inter}, which is the key ingredient for the proof of the main result in this paper. $\Phi$-moment versions of noncommutative martingale transforms and Stein's inequalities are proved in Section \ref{martran}. As an immediate application of $\Phi$-moment inequalities of noncommutative martingale transforms, we will prove the UMD property of noncommutative Orlicz spaces. In Section \ref{khint}, the noncommutative $\Phi$-moment Khintchine inequalities for Rademacher's random variables are proved via interpolation again. By the $\Phi$-moment inequalities proved previously, we deduce the $\Phi$-moment version of noncommutative Burkholder-Gundy's martingale inequalities in Section \ref{BG}. Finally, in Section \ref{re}, we make some remarks on our results and possible further researches.

In what follows, $C$ always denotes a constant, which may be
different in different places. For two nonnegative (possibly
infinite) quantities $X$ and $Y$ by $X \approx Y$ we mean that there
exists a constant $C>1$ such that $C^{-1} X \leq Y \leq C X.$

\section{Preliminaries}\label{pre}

\subsection{Noncommutative Orlicz spaces}

We use standard notation and notions from theory of noncommutative
$L_{p}$-spaces. Our main references are \cite{PX2} and \cite{X2} (see
also \cite{PX2} for more  historical references). Let  $\mathcal{N}$
be a semifinite von Neumann algebra acting on a Hilbert space
$\mathbb{H}$ with a normal semifinite faithful trace $\nu.$ Let
$L_{0}(\mathcal{N})$ denote the topological $*$-algebra of
measurable operators with respect to $(\mathcal{N}, \nu)$. The
topology of $L_{0}(\mathcal{N})$ is determined by the convergence of
measure. The trace $\nu$ can be extended to the positive cone
$L_{0}^{+}(\mathcal{N})$ of $L_{0}(\mathcal{N}):$
$$
\nu(x)= \int_{0}^{\infty}\lambda\,d\nu(E_{\lambda}(x)),
$$
where  $x=\int_{0}^{\infty}\lambda\,dE_{\lambda}(x)$ is the spectral
decomposition of $x$. Given $0<p<\infty$, let
$$
L_{p}(\mathcal{N})=\{x\in L_{0}(\mathcal{N}):\;
 \nu(|x|^{p})^{\frac{1}{p}}<\infty\}.
$$
We define
$$
\|x\|_{p}= \nu(|x|^{p})^{\frac{1}{p}},\quad x\in
L_{p}(\mathcal{N}).
$$
Then, $(L_{p}(\mathcal{N}),\|.\|_{p})$ is a Banach (or quasi-Banach for
$p<1$) space. They are the noncommutative $L_{p}$-space associated
with $(\mathcal{N},\nu)$, denoted by $L_{p}(\mathcal{N},\nu)$
or simply by $L_{p}(\mathcal{N})$. As usual, we set
$L_{\infty}(\mathcal{N},\nu)=\mathcal{N}$ equipped with the
operator norm.

\begin{definition} Let $\mathcal{N}$ be a semifinite von Neumann algebra on a Hilbert
space $\mathbb{H}$ with a normal semifinite faithful  trace
$\nu$. Let $x\in L_{0}(\mathcal{N})$. Define
$$
\lambda_{s}(x)=\nu(E_{(s,\infty)}(|x|)),\quad s>0,
$$
where $E_{(s,\infty)}(|x|)$ is the spectral projection of $x$
associated with the interval $(s,\infty))$. The function
$s\mapsto\lambda_{s}(x)$ is called the distribution function of $x$.
\end{definition}

For $0<p<\infty,$ we have the following Kolmogorov inequality:
\begin{equation}\label{eq:kol}
\lambda_{s}(x)\leq\frac{\|x\|_{p}^{p}}{s^{p}},\quad \forall x\in
L_{p}(\mathcal{N}).
\end{equation}

\begin{definition} Let $x$ be a $\tau$-measure operator and
$t>0.$ The ``$t$-th singular number of $x$" $\mu_{t}(x)$ is defined by
$$
\mu_{t}(x)=\inf\left\{\|xe\|: \;e\;\mbox{is any projection
in}\:{\mathcal N}\;\mbox{with}\;\tau(e^{\perp})\leq t\right\}.
$$
\end{definition}

The $\mu_{.}(x)$  is finite valued and decreasing function on
$(0,\infty).$ For further information on the generalised singular
value we refer the reader to \cite{FK}.

Let $\Phi$  be an Orlicz function on $[0,\infty),$ i.e., a continuous increasing and convex function satisfying $\Phi(0)=0$ and $\lim_{t\rightarrow
\infty}\Phi(t)=\infty.$ Recall that $\Phi$ is said to satisfy the
$\triangle_2$-condition if there is a constant $C$ such that $\Phi(2t)\leq C\Phi(t)$ for all $t>0.$ In this case, we denote by $\Phi \in \Delta_2.$
It is easy to check that $\Phi \in \triangle_2$ if and only if for any $a > 0$ there is a constant $C_a>0$ such that $\Phi(a t)\leq C_a \Phi(t)$ for all $t>0.$

We will work with some standard indices associated to an Orlicz function. Given an Orlicz function $\Phi.$ Let
\be M(t, \Phi)= \sup_{s >0} \frac{\Phi (t s)}{\Phi (s)},\quad t >0.
\ee
Define
\be
p_{\Phi} = \lim_{t \to 0} \frac{\log M(t, \Phi)}{\log t}, \quad q_{\Phi} = \lim_{t \to \8} \frac{\log M(t, \Phi)}{\log t}.
\ee
All the following properties we will use in the sequel are classical and can be found in \cite{M1}:
\begin{enumerate}[{\rm (1)}]

\item $1 \le p_{\Phi} \le q_{\Phi} \le \8.$

\item We have the following characterizations of $p_{\Phi}$ and $q_{\Phi}:$
\be p_{\Phi} = \sup \Big \{ p >0:\; \int^t_0 s^{-p} \Phi (s) \frac{d s}{s} = O(t^{- p} \Phi (t)),\; \forall t >0 \Big \};\ee
\be q_{\Phi} = \inf \Big \{ q >0:\; \int^{\8}_t s^{-q} \Phi (s) \frac{d s}{s} = O(t^{- q} \Phi (t)),\; \forall t >0 \Big \}.\ee

\item $\Phi \in \triangle_2$ if and only if $q_{\Phi} < \8$ if and only if $ \sup_{t>0} t \Phi'(t)/\Phi(t)< \8.$ ($\Phi' (t)$ is defined for each $t > 0$ except for a countable set of points in which we take $\Phi'(t)$ as the derivative from the right.)

\end{enumerate}
See \cite{M1, M2} for more information on Orlicz functions and Orlicz spaces.

For an Orlicz function $\Phi,$ the noncommutative Orlicz space
$L_{\Phi}(\mathcal{N})$ is defined as the space of all measurable operators with
respect to $(\mathcal{N},\nu)$ such that
$$\nu \Big ( \Phi \Big ( \frac{|x|}{c} \Big ) \Big )<\infty$$
for some $c>0.$ The space $L_{\Phi}(\mathcal{N}),$ equipped with the norm
\be
\|x\|_{\Phi}= \inf \big \{c>0: \;\nu \big ( \Phi({|x|}/{c}) \big )<1 \big \},
\ee
is a Banach space. If $\Phi(t)=t^p$ with $1 \leq p<\infty$ then
$L_\Phi(\mathcal{N})= L_p(\mathcal{N}).$ Noncommutative Orlicz spaces are symmetric spaces of measurable operators as defined in \cite{X1}.

Let $a=(a_{n})$ be a finite sequence in $L_{\Phi}({\mathcal{N}}),$
we define
\be
\|a\|_{L_{\Phi}({\mathcal{N}},\el_{C}^{2})}= \Big \| \Big ( \sum_n
|a_{n}|^{2} \Big )^{1/2} \Big \|_{\Phi}\;
\text{and}\;
\|a\|_{L_{\Phi}({\mathcal{N}},\el_{R}^{2})} = \Big \| \Big ( \sum_{n\geq 0
} |a_{n}^{*}|^{2} \Big )^{1/2} \Big \|_{\Phi},
\ee
respectively. This gives two norms on the family of all finite sequences in
$L_{\Phi}({\mathcal{N}}).$ To see this, let us consider the von Neumann
algebra tensor product ${\mathcal{N}}\otimes{ \mathcal{B}}(\el^{2})$
with the product trace $\nu \otimes \mathrm{tr},$ where
${\mathcal{B}}(\el^{2}) $ is the algebra of all bounded operators on
$\el^{2}$ with the usual trace $\mathrm{tr}.$ $\nu \otimes \mathrm{tr}$ is a
semifinite normal faithful trace. The associated noncommutative
Orlicz space is denoted by $L_{\Phi}({\mathcal{N}}\otimes
{\mathcal{B}}(\el^{2})).$ Now, any finite sequence $a=(a_{n})_{n\geq
0}$ in $L_{\Phi}({\mathcal{N}})$ can be regarded as an element in
$L_{\Phi}({\mathcal{N}}\otimes {\mathcal{B}}(\el^{2}))$ via the
following map
\be
a\longmapsto T(a)= \left(
\begin{matrix}
a_{0} & 0 & \ldots \\
a_{1} & 0 & \ldots \\
\vdots & \vdots & \ddots
\end {matrix}
\right ),
\ee
that is, the matrix of $T(a)$ has all vanishing entries except those
in the first column which are the ${a_{n}}$'s. Such a matrix is
called a column matrix, and the closure in
$L_{\Phi}({\mathcal{N}}\otimes {\mathcal{B}}(\el^{2}))$ of all column
matrices is called the column subspace of
$L_{\Phi}({\mathcal{N}}\otimes {\mathcal{B}}(\el^{2})).$ Since
\be
\|a\|_{L_{\Phi}({\mathcal{N}},\el_{C}^{2})}=\||T(a)|\|_{L_{\Phi}({\mathcal{N}}\otimes
{\mathcal{B}}(\el^{2}))}= \|T(a)\|_{L_{\Phi}({\mathcal{N}}\otimes
{\mathcal{B}}(\el^{2}))},
\ee
then $\|.\|_{L_{\Phi}({\mathcal{N}},\el_{C}^{2})}$ defines a norm
on the family of all finite sequences of $L_{\Phi}({\mathcal{N}}).$
The corresponding completion $L_{\Phi}({\mathcal{N}},\el_{C}^{2})$ is a Banach space. It is clear that a sequence
$a=(a_{n})_{n\geq 0}$ in $L_{\Phi}({\mathcal{N}})$ belongs to
$L_{\Phi}({\mathcal{N}}, \el_{C}^{2})$ if and only if
\be
\sup_{n\geq 0} \Big \| \Big ( \sum_{k= 0
}^{n}|a_{k}|^{2} \Big )^{1/2} \Big \|_{\Phi}<\infty.
\ee
If this is the case, $\big ( \sum_{k= 0
}^{\infty}|a_{k}|^{2} \big )^{1/2}$ can be appropriately defined as an
element of $L_{\Phi}({\mathcal{N}}).$ Similarly, $
\|.\|_{L_{\Phi}({\mathcal{N}},\el_{R}^{2})}$ is also a norm on the family
of all finite sequence in $L_{\Phi}({\mathcal{N}}),$ and the corresponding completion $L_{\Phi}({\mathcal{N}}, \el_{R}^{2})$ is a Banach space, which is isometric to the row subspace of
$L_{\Phi}({\mathcal{N}}\otimes {\mathcal{B}}(\el^{2}))$ consisting of
matrices whose nonzero entries lie only in the first row. Observe
that the column and row subspaces of $L_{\Phi}({\mathcal{N}}\otimes
{\mathcal{B}}(\el^{2}))$ are 1-complemented by Theorem 3.4
in \cite{DDP2}.

\begin{definition} Let $\Phi$ be an Orlicz function. The space
$CR_{\Phi}[L_{\Phi}({\mathcal{N}})]$ is defined as follows:
\begin{enumerate}[\rm (1)]

\item If $q_{\Phi} < 2,$
\be
CR_{\Phi}[L_{\Phi}({\mathcal{N}})]=L_{\Phi}({\mathcal{N}}, \el_{C}^{2})+
L_{\Phi}({\mathcal{N}}, \el_{R}^{2})
\ee
equipped with the sum norm:
\be
\|(x_{n})\|_{CR_{\Phi}[L_{\Phi}({\mathcal{N}})]}=\inf \big \{ \|(y_{n})\|_{L_{\Phi}({\mathcal{N}}, \el_{C}^{2})},\, \|(y_{n})\|_{L_{\Phi}({\mathcal{N}}, \el_{R}^{2})} \big \},
\ee
where the infimun runs over all decomposition $x_{n}=y_{n}+z_{n}$
with $y_{n}$ and $z_{n}$ in $L_{\Phi}({\mathcal{N}})$.

\item  If $2\leq p_{\Phi},$
\be
CR_{\Phi}[L_{\Phi}({\mathcal{N}})]=L_{\Phi}({\mathcal{N}}, \el_{C}^{2}) \cap
L_{\Phi}({\mathcal{N}}, \el_{R}^{2})
\ee
equipped with the intersection norm:
\be
\|(x_{n})\|_{CR_{\Phi}[L_{\Phi}({\mathcal{N}})]}=\max \big \{\|(x_{n})\|_{L_{\Phi}({\mathcal{N}}, \el_{C}^{2})},\, \|(x_{n})\|_{L_{\Phi}({\mathcal{N}}, \el_{R}^{2})} \big \}.
\ee
\end{enumerate}
\end{definition}


In the sequel, unless otherwise specified, we always denote by $\Phi$ an Orlicz function.

\subsection{Noncommutative martingales}

Let ${\mathcal{M}}$ be a finite von Neaumann algebra
with a normalized normal faithful trace $\tau.$ Let
$({\mathcal{M}}_{n})_{n\geq 0}$ be an increasing sequence of von
Neaumann subalgebras of ${\mathcal{M}}$ such that $\cup_{n\geq 0}
{\mathcal{M}}_{n}$ generates ${\mathcal{M}}$ (in the
$w^{*}$-topology). $({\mathcal{M}}_{n})_{n\geq 0}$ is called a
filtration of ${\mathcal{M}}.$ The restriction of $\tau$ to
${\mathcal{M}}_{n}$ is still denoted by $\tau.$ Let
${\mathcal{E}}_{n}={\mathcal{E}}(.|{\mathcal{M}}_{n})$ be the
conditional expectation of ${\mathcal{M}}$ with respect to
${\mathcal{M}}_{n}.$ Then ${\mathcal{E}}_{n}$ is a norm 1 projection of
$L_{\Phi}({\mathcal{M}})$ onto $L_{\Phi}({\mathcal{M}}_{n})$ (see
Theorem 3.4 in \cite{DDP2}) and ${\mathcal{E}}_{n}(x)\geq 0$
whenever $x\geq 0$.

A non-commutative $L_{\Phi}$-martingale with respect to $({\mathcal{M}}_{n})_{n\geq 0}$ is a sequence $x=(x_{n})_{n\geq 0}$
such that $x_{n} \in L_{\Phi}({\mathcal{M}}_{n})$ and
$${\mathcal{E}}_n(x_{n+1})=x_n$$
for any $n \ge 0.$ Let $\|x\|_{\Phi}=\sup_{n\geq 0}\|x_{n}\|_{\Phi}.$ If $\|x\|_{\Phi}
<\infty,$ then $x$ is said to be a bounded $L_{\Phi}$-martingale.

\begin{remark}\label{re:conver}
\begin{enumerate}[\rm (1)]

\item Let $x_{\infty}\in L_{\Phi}({\mathcal{M}}).$ Set
$x_{n}={\mathcal{E}}_{n}(x_{\infty})$ for all $n\geq0.$ Then
$x=(x_{n})$ is a bounded $L_{\Phi}$-martingale and
$\|x\|_{L_{\Phi}({\mathcal{M}})}=\|x_{\infty}\|_{L_{\Phi}({\mathcal{M}})}.$

\item Suppose $\Phi$ is an Orlicz function with $1<p_{\Phi}\le
q_{\Phi}<\infty.$ Then $L_{\Phi}({\mathcal{M}})$ is reflexive. By the standard argument we conclude that any bounded
noncommutative martingale $x=(x_{n})$ in $L_{\Phi}({\mathcal{M}})$
converges to some $x_{\infty}$ in $L_{\Phi}({\mathcal{M}})$ and
$x_{n}={\mathcal{E}}_{n}(x_{\infty})$ for all $n\geq0.$

\item  Let ${\mathcal{M}}$ be a semifinite von Neaumann algebra
with a semifinite normal faithful trace $\tau$. Let
$({\mathcal{M}}_{n})_{n\geq 0}$ be  a filtration of ${\mathcal{M}}$
such that the restriction of $\tau$ to each ${\mathcal{M}}_{n}$ is
still semifinite. Then we can define noncommutative martingales with
respect to $({\mathcal{M}}_{n})_{n\geq 0}$. All results on
noncommutative martingales that will be presented below in this
paper can be extended to this semifinite setting.
\end{enumerate}
\end{remark}

Let $x$ be a noncommutative martingale. The martingale difference sequence of $x,$ denoted
by $dx=(dx_{n})_{n\geq 0},$ is defined as
$$
dx_{0}=x_{0},\quad dx_{n}=x_{n}-x_{n-1},\quad n\geq 1.
$$
Set
$$
S_{C,n}(x)= \Big ( \sum_{k= 0  }^{n}|dx_{k}|^{2} \Big )^{1/2} \quad \mbox{and}
\quad S_{R,n}(x)= \Big ( \sum_{k=0}^{n}|dx_{k}^{*}|^{2} \Big )^{1/2}.
$$
By the preceding discussion, $dx$ belongs to
$L_{\Phi}({\mathcal{M}},\el_{C}^{2})$ (resp.
$L_{\Phi}({\mathcal{M}}, \el_{R}^{2}))$ if and only if $(S_{C,n}(x))_{n\geq 0}$
(resp. $(S_{R,n}(x))_{n\geq 0}$) is a bounded sequence in
$L_{\Phi}({\mathcal{M}});$ in this case,
$$
S_{C}(x)= \Big ( \sum_{k= 0    }^{\infty}|dx_{k}|^{2} \Big )^{1/2} \quad
\mbox{and} \quad S_{R}(x)= \Big ( \sum_{k= 0
}^{\infty}|dx_{k}^{*}|^{2} \Big )^{1/2}
$$
are elements in $L_{\Phi}({\mathcal{M}}).$ These are noncommutative
analogues of the usual square functions in the commutative
martingale theory. It should be pointed out that the two sequences
$S_{C,n}(x)\: \mbox{and}\: S_{R,n}(x)$ may not be bounded in
$L_{\Phi}({\mathcal{M}})$ at the same time.

We define $\mathcal{H}_{C}^{\Phi}({\mathcal{M}})$ (resp. $\mathcal{H}_{R}^{\Phi}({\mathcal{M}})$)
to be the space of all $L_{\Phi}$-martingales with respect to
$({\mathcal{M}}_{n})_{n\geq 0}$ such that $dx \in
L_{\Phi}({\mathcal{M}}, \el_{C}^{2})$ (resp. $dx \in
L_{\Phi}({\mathcal{M}}, \el_{R}^{2})$ ), equipped with the norm
\be
\|x\|_{\mathcal{H}_{C}^{\Phi}({\mathcal{M}})}=\|dx\|_{L_{\Phi}({\mathcal{M}}, \el_{C}^{2})
} \quad \big ( \mbox{resp.} \;
\|x\|_{\mathcal{H}_{R}^{\Phi}({\mathcal{M}})}=\|dx\|_{L_{\Phi}({\mathcal{M}}, \el_{R}^{2}) } \big ).
\ee
$\mathcal{H}_{C}^{\Phi}({\mathcal{M}})$ and $\mathcal{H}_{R}^{\Phi}({\mathcal{M}})$ are
Banach spaces. Note that if $x\in \mathcal{H}_{C}^{\Phi}({\mathcal{M}}),$
\be
\|x\|_{\mathcal{H}_{C}^{\Phi}({\mathcal{M}})}=\sup_{n\geq
0}\|S_{C,n}(x)\|_{L_{\Phi}({\mathcal{M}})}
=\|S_{C}(x)\|_{L_{\Phi}({\mathcal{M }})}.
\ee
The similar equalities hold for $\mathcal{H}_{R}^{\Phi}({\mathcal{M}}).$

Now, we define the Orlicz-Hardy spaces of noncommutative martingales as follows:
If $q_{\Phi} < 2,$
\be
\mathcal{H}_{\Phi}({\mathcal{M}}) = \mathcal{H}_{C}^{\Phi}({\mathcal{M}}) + \mathcal{H}_{R}^{\Phi}({\mathcal{M}}),
\ee
equipped with the norm
\be
\|x\|=\inf \{ \|y\|_{ \mathcal{H}_{C}^{\Phi}({\mathcal{M}})}+\|z\|_{\mathcal{H}_{R}^{\Phi}({\mathcal{M}})}:
x=y+z,\; y \in \mathcal{H}_{C}^{\Phi}({\mathcal{M}}),\; z \in
\mathcal{H}_{R}^{\Phi}({\mathcal{M}})\}.
\ee
If $2\leq p_{\Phi},$
$$
\mathcal{H}_{\Phi}({\mathcal{M}}) = \mathcal{H}_{C}^{\Phi}({\mathcal{M}}) \cap
\mathcal{H}_{R}^{\Phi}({\mathcal{M}}),
$$
equipped with the norm
$$
\|x\|=\max \{ \|x\|_{\mathcal{H}_{C}^{\Phi}({\mathcal{M}})},\;
\|x\|_{\mathcal{H}_{R}^{\Phi}({\mathcal{M}})} \}.
$$

The reason that we have defined $ \mathcal{H}_{\Phi}({\mathcal{M}})$ differently according to $q_{\Phi} < 2 $ or
$2\leq p_{\Phi}$ will become clear in the next section. This has been used first in \cite{PX1,PX2} and also in \cite{LPP}.

\section{An interpolation theorem}\label{inter}

The main result of this section is a Marcinkiewicz type interpolation theorem for noncommutative Orlicz spaces. It is the key to our proof of $\Phi$-moment inequalities of the noncommutative martingales. We first introduce the following definition.

\begin{definition}
Let $\mathcal{N}_1$ (resp. $\mathcal{N}_2$) be
a semifinite von Neumann algebra on a Hilbert space $\mathbb{H}_1$ (resp. $\mathbb{H}_2$)
with a normal semifinite faithful  trace $\nu_1$ (resp. $\nu_2$). A
map $T:L_{0}(\mathcal{N}_1)\rightarrow L_{0}(\mathcal{N}_2)$ is said to
be sublinear if for any  operators $x,y\in L_{0}(\mathcal{N}_1),$
there exist isometrics $u,v\in \mathcal{N}_2$ such that
$$
|T(x+y)|\leq u^{*}|Tx|u+v^{*}|Ty|v,\quad |T(\alpha x)|\leq
|\alpha||Tx|, \;\forall\alpha\in \mathbb{C}.
$$
\end{definition}

This definition of sublinear operators in the noncommutative setting belongs to Q.Xu, which first appeared in Ying Hu's thesis \cite{Hu1} (see also \cite{Hu2}). We recall the definition that a sublinear operator $T: L_{0}(\mathcal{N}_1)\rightarrow L_{0}(\mathcal{N}_2)$ is of {\it weak type} $(p,q)$ with $1 \le p \le q \le \8.$ This means that there is a constant $C>0,$ so that for every $x \in L_p (\N_1)$
\beq\label{eq:WLp} \lambda_{\alpha}(|Tx|) \leq \Big ( \frac{C\|x\|_p}{\alpha} \Big )^q,\quad \forall \alpha > 0.\eeq
If $q = \8,$ it means that $\| T x \|_q \le C \| x \|_p.$

The classical Marcinkiewicz interpolation theorem has been extended to include Orlicz spaces as interpolation classes by A.Zygmund, A.P.Calder\'{o}n, S.Koizumi, I.B.Simonenko, W.Riordan, H.P.Heinig and A.Torchinsky (for references see \cite{M2} and therein). The following result is a noncommutative analogue of the Marcinkiewicz type interpolation theorem for Orlicz spaces.

\begin{theorem}\label{th:Inter}
Let $\mathcal{N}_1$ (resp. $\mathcal{N}_2$) be a semifinite von Neumann algebra on a Hilbert space $\mathbb{H}_1$ (resp. $\mathbb{H}_2$) with a normal semifinite faithful  trace $\nu_1$ (resp. $\nu_2$).
Suppose $1 \le p_{0}<p_{1} \leq \infty.$ Let $T:L_{0}(\mathcal{N}_1) \rightarrow
L_{0}(\mathcal{N}_2)$ be a sublinear operator and simultaneously of weak types $(p_i, p_i)$ for $i=0$ and $i=1.$ If $\Phi$ is an Orlicz function with $p_{0}<p_{\Phi}\le q_{\Phi}<p_{1},$ then there exists a constant $C$ depending only on $p_0, \; p_1$ and $\Phi,$ such that
\begin{equation}\label{strongphi}
\nu_2 (\Phi(|Tx|))\leq C \nu_1 ( \Phi(|x|)),
\end{equation}
for all $x \in L_\Phi(\mathcal{N}_1).$
\end{theorem}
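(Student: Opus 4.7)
The plan is a real-interpolation argument in the Marcinkiewicz style, adapted to the noncommutative setting via spectral projections and to Orlicz spaces via the integral characterizations of the indices $p_\Phi, q_\Phi$ recalled in Section~\ref{pre}. The starting point is the layer-cake identity
$$\nu_2(\Phi(|Tx|)) \,=\, \int_0^\infty \Phi'(\alpha)\, \lambda_\alpha(Tx)\, d\alpha,$$
so that the task reduces to bounding $\lambda_\alpha(Tx)$ in terms of the distribution function $\lambda_\cdot(x)$ of the input.

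For each $\alpha>0$ I would decompose $x$ by spectral truncation: writing $x=u|x|$ and $e_\alpha=\chi_{(\alpha,\infty)}(|x|)$, set $x_0^\alpha = xe_\alpha$ and $x_1^\alpha = x(\un - e_\alpha)$. Since $e_\alpha$ commutes with $|x|$, the spectral theorem yields $\lambda_\beta(x_0^\alpha) = \lambda_{\max(\alpha,\beta)}(x)$ and $\lambda_\beta(x_1^\alpha) = (\lambda_\beta(x)-\lambda_\alpha(x))_+$, from which $x_0^\alpha\in L_{p_0}(\mathcal{N}_1)$ and $x_1^\alpha\in L_{p_1}(\mathcal{N}_1)$ with norms expressible as integrals of $\lambda_\cdot(x)$. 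Sublinearity of $T$ furnishes isometries $u,v\in\mathcal{N}_2$ with $|Tx|\leq u^*|Tx_0^\alpha|u+v^*|Tx_1^\alpha|v$; using that $\mu_t(u^* y u)\leq\mu_t(y)$ for isometric $u$, so that isometric conjugation does not increase distribution functions, and that $0\leq a\leq b+c$ implies $\lambda_{s+t}(a)\leq\lambda_s(b)+\lambda_t(c)$, the weak-type hypotheses then give
$$\lambda_{2\alpha}(Tx) \,\leq\, \lambda_\alpha(Tx_0^\alpha)+\lambda_\alpha(Tx_1^\alpha) \,\leq\, \Bigl(\frac{C_0\,\|x_0^\alpha\|_{p_0}}{\alpha}\Bigr)^{p_0} + \Bigl(\frac{C_1\,\|x_1^\alpha\|_{p_1}}{\alpha}\Bigr)^{p_1},$$
with the case $p_1=\infty$ handled separately by rescaling the cutoff so that $\|Tx_1^\alpha\|_\infty<\alpha/2$ makes the second term vanish.

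Substituting this into the layer-cake formula, expressing the $L_{p_i}$-norms of the truncations via $\lambda_\cdot(x)$, and swapping integrals by Fubini produces two double integrals of $\lambda_\beta(x)$ against kernels of the form $\alpha^{-p_i}\Phi'(\alpha)$. These are exactly what the index characterizations in Section~\ref{pre} are designed to control: $p_0<p_\Phi$ yields $\int_0^\beta \alpha^{-p_0-1}\Phi(\alpha)\,d\alpha\leq C\beta^{-p_0}\Phi(\beta)$, while $q_\Phi<p_1$ yields $\int_\beta^\infty\alpha^{-p_1-1}\Phi(\alpha)\,d\alpha\leq C\beta^{-p_1}\Phi(\beta)$. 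Combined with the pointwise comparison $\Phi'(t)\approx\Phi(t)/t$, a consequence of $\Phi\in\Delta_2$ (i.e.\ $q_\Phi<\infty$), each double integral collapses back to $\int_0^\infty\Phi'(\beta)\lambda_\beta(x)\,d\beta = \nu_1(\Phi(|x|))$, and a final application of $\Delta_2$ absorbs the spurious factor of $2$ to yield \eqref{strongphi}. The main obstacle is purely noncommutative: one must verify carefully that the one-sided spectral truncation $x\mapsto xe_\alpha$ behaves at the level of distribution functions exactly like a commutative characteristic-function truncation, and that sublinearity of $T$ combined with the isometric conjugation $u^*(\cdot)u$ still permits the clean layer-cake reduction $\lambda_{2\alpha}(Tx)\leq\lambda_\alpha(Tx_0^\alpha)+\lambda_\alpha(Tx_1^\alpha)$; once this is in place, everything else is bookkeeping against the index inequalities.
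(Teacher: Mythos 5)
Your proposal is correct and follows essentially the same route as the paper: decompose $x=x_0^\alpha+x_1^\alpha$ by the spectral truncation $x_0^\alpha=xE_{(\alpha,\infty)}(|x|)$, use sublinearity together with the fact that isometric conjugation does not increase distribution functions and that $\lambda_{s+t}(a)\le\lambda_s(b)+\lambda_t(c)$ whenever $0\le a\le b+c$ to get $\lambda_{2\alpha}(|Tx|)\le\lambda_\alpha(|Tx_0^\alpha|)+\lambda_\alpha(|Tx_1^\alpha|)$, apply the two weak-type bounds, substitute into the layer-cake formula, Fubini, invoke the integral characterizations of $p_\Phi$ and $q_\Phi$, and use $\Delta_2$ (equivalently $q_\Phi<\infty$) to compare $d\Phi(2\alpha)$ with $\alpha^{-1}\Phi(\alpha)\,d\alpha$, with $p_1=\infty$ treated separately. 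The single point you flag as needing care, that $|xE_{(\alpha,\infty)}(|x|)|=|x|E_{(\alpha,\infty)}(|x|)$ because the projection commutes with $|x|$, is exactly the observation that makes the one-sided truncation behave commutatively, and it holds; the paper uses it silently.
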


\begin{proof}
At first, we take $p_1<\infty.$ For
$\alpha>0,$ let $x=x_0^\alpha+x_1^\alpha,$ where
$x_0^\alpha=xE_{(\alpha,\infty)}(|x|).$ From the sublinearity of
$T,$ it follows that
\begin{equation}\label{lamb1}
\lambda_{2\alpha} (|Tx|)\leq
\lambda_{\alpha}(|Tx_0^\alpha|)+\lambda_{\alpha}(|Tx_1^\alpha|).
\end{equation}
By \eqref{eq:WLp}, there are two constants $A_0, A_1 >0$ such that for any $\alpha >0$
\begin{equation}\label{weaks0}
\lambda_{\alpha}(|Tx|)\leq
A_0^{p_0}\alpha^{-p_0}\|x\|_{p_0}^{p_0},\quad\forall x\in
L_{p_0}(\mathcal{N}_1),
\end{equation}
\begin{equation}\label{weaks1}
\lambda_{\alpha}(|Tx|)\leq A_1
^{p_1}\alpha^{-p_1}\|x\|_{p_1}^{p_1}, \quad \forall x \in
L_{p_1}(\mathcal{N}_1).
\end{equation}
 Using \eqref{lamb1}, \eqref{weaks0} and \eqref{weaks1} , we have
\beqn
\begin{split}
\nu_2 ( \Phi(|Tx|)) &  =\int_0^\infty
\lambda_{2\alpha}(|Tx|)d\Phi(2\alpha) \\
& \leq \int_0^\infty
\lambda_{\alpha}(|Tx_0^\alpha|)d\Phi(2\alpha) + \int_0^\infty
\lambda_{\alpha}(|Tx_1^\alpha|)d\Phi(2\alpha)\\
& \leq A_0^{p_0}\int_0^\infty
 \alpha^{-p_0} \|x_0^\alpha\|_{p_0}^{p_0}d\Phi(2\alpha)+
A_1 ^{p_1} \int_0^\infty
\alpha^{-p_1}\|x_1^\alpha\|_{p_1}^{p_1}d\Phi(2\alpha)\\
& \leq A_0^{p_0}\int_0^\infty
  \alpha^{-p_0} \nu_1 \big ( |x|^{p_0}E_{(\alpha,\infty)}(|x|) \big ) d \Phi(2\alpha)\\
& \quad + A_1 ^{p_1} \int_0^\infty
  \alpha^{-p_1} \nu_1 \big ( |x|^{p_1}E_{(0,\alpha]}(|x|) \big ) d \Phi(2 \alpha)\\
&\leq A_0^{p_0} \int_0^\infty
  \alpha^{-p_0} \Big ( \int_\alpha^\infty t^{p_0}d \nu_1 (E_{t}(|x|)) \Big ) d \Phi(2\alpha)\\
& \quad + A_1 ^{p_1} \int_0^\infty
  \alpha^{-p_1} \Big ( \int_0^\alpha t^{p_1}d \nu_1 (E_{t}(|x|)) \Big ) d \Phi(2\alpha)\\
& = A_0^{p_0} \int_0^\infty t^{p_0} \Big ( \int_0^t
  \alpha^{-p_0} d \Phi(2\alpha) \Big ) d \nu_1 (E_{t}(|x|))\\
& \quad + A_1^{p_1} \int_0^\infty t^{p_1} \Big ( \int_t^\infty
  \alpha^{-p_1}d\Phi(2\alpha) \Big ) d \nu_1 (E_{t}(|x|)).
\end{split}
\eeqn
By the assumption, we know that $\Phi$ satisfies the $\Delta_2$-condition. This implies that
\be \sup_{t > 0} \frac{t \Phi' (t)}{\Phi(t)} < \8.
\ee
Then, we have
\be \begin{split}
\nu_2 (\Phi(|Tx|)) & \le C_\Phi \Big [ A_0^{p_0} \int_0^\infty t^{p_0} \Big ( \int_0^t
  \alpha^{-p_0-1}\Phi( \alpha)d\alpha \Big ) d \nu_1 (E_{t}(|x|))\\
& \quad + A_1 ^{p_1} \int_0^\infty t^{p_1} \Big ( \int_t^\infty
  \alpha^{-p_1-1}\Phi(\alpha) d \alpha \Big ) d \nu_1 (E_{t}(|x|))\Big ].
\end{split} \ee
On the other hand, by the assumption we have
\be
\int^t_0 s^{- p_0} \Phi (s) \frac{d s}{s} = O (t^{- p_0} \Phi (t))\; \text{and}\; \int^{\8}_t s^{- p_1} \Phi (s) \frac{d s}{s} = O (t^{- p_1} \Phi (t))
\ee
for all $t >0,$ respectively. Hence,
\be
\nu_2 (\Phi(|Tx|)) \le C_\Phi ( A_0^{p_0}+ A_1 ^{p_1} ) \int_0^\infty \Phi(t) d \nu_1 (E_{t}(|x|)) = C \nu_1 (\Phi(|x|)),
\ee
where $C$ depends only on $p_0,\; p_1$ and $\Phi,$ i.e., \eqref{strongphi} holds.

Let $p_1=\infty$ and let $x=x_0^\alpha+x_1^\alpha$ as above. Then
\be \|Tx_1^\alpha\|_{L_\infty}\leq A_1 \|x_1^\alpha\|_{L_\infty}\leq
A_1\alpha \ee and $\lambda_{A_1\alpha}(|Tx_1^\alpha|)=0.$ According to
the above estimate, one obtains
\be
\lambda_{A_1\alpha}(|Tx|)\leq\lambda_{A_1\alpha}(|Tx_0^\alpha|)\leq
\frac{\|Tx_0^\alpha\|^{p_0}_{p_0}}{(A_1\alpha)^{p_0}}
 \le \Big ( \frac{A_0}{A_1} \Big )^{p_0}\alpha^{-p_0}\|x_0^\alpha\|^{p_0}_{p_0}.
\ee
Then, by the same argument as above we have
\beqn
\begin{split}
\nu_2 (\Phi(|Tx|) &  =\int_0^\infty \lambda_{A_1\alpha}(|Tx|)d\Phi(A_1\alpha)\\
& \leq \Big ( \frac{A_0}{A_1} \Big )^{p_0}\int_0^\infty
 \alpha^{-p_0} \|x_0^\alpha\|_{p_0}^{p_0}d\Phi(A_1\alpha)\\
& = \Big ( \frac{A_0}{A_1} \Big )^{p_0}\int_0^\infty
  \alpha^{-p_0} \nu_1 \big ( |x|^{p_0}E_{(\alpha,\infty)}(|x|) \big ) d \Phi(A_1\alpha)\\
& = \Big ( \frac{A_0}{A_1} \Big )^{p_0}\int_0^\infty
  \alpha^{-p_0} \Big ( \int_\alpha^\infty t^{p_0}d \nu_1 (E_{t}(|x|)) \Big ) d \Phi(A_1\alpha)\\
& = \Big ( \frac{A_0}{A_1} \Big )^{p_0}\int_0^\infty t^{p_0} \Big ( \int_0^t
  \alpha^{-p_0}d\Phi(A_1\alpha) \Big ) d \nu_1 (E_{t}(|x|))\\
& \le C_\Phi \Big ( \frac{A_0}{A_1} \Big )^{p_0} \int_0^\infty t^{p_0} \Big ( \int_0^t
  \alpha^{-p_0-1}\Phi( \alpha) d \alpha \Big ) d \nu_1 (E_{t}(|x|))\\
& \le C_\Phi \Big ( \frac{A_0}{A_1} \Big )^{p_0} \int_0^\infty \Phi(t) d \nu_1 (E_{t}(|x|))\\
& = C \nu_1 (\Phi(|x|)),
\end{split}
\eeqn
where $C$ depends only on $p_0,\; p_1$ and $\Phi.$ This completes the proof.
\end{proof}

\begin{remark}\label{re:Inter}
\begin{enumerate}[\rm (1)]

\item If $T$ is of strong type $(p, p),$ i.e., there exists a constant $C>0$ such that $\| T x \|_p \le C \| x \|_p$ for any $x \in L_p (\N),$ then by the Kolmogorov inequality \eqref{eq:kol} we have \be \lambda_{\alpha}(|Tx|)\leq \alpha^{-p} \| Tx \|^p_p \le C^p \alpha^{-p} \|x \|^p_p,\ee
that is, $T$ is of weak type $(p, p).$ Consequently, if $T$ is simultaneously of strong types $(p_i, p_i)$ for $i=0$ and $i=1,$ then the above Theorem still holds.

\item If we only consider the spaces of Hermitian operators, that is,
\be
L_p (\N)_{\mathrm{Her}} = \{x \in L_p (\N):\; x^* =x \},
\ee
the corresponding result of Theorem \ref{th:Inter} also holds. The proof is the same as above and omitted.

\end{enumerate}
\end{remark}

\section{$\Phi$-moment inequalities of martingale transforms}\label{martran}

In the sequel, $ ({\mathcal{M}},\tau)$ always denotes a finite
von Neumann algebra with a normalized normal faithful
trace $\tau$ and $({\mathcal{M}}_{n})_{n\geq 0}$ an increasing filtration
of subalgebras of ${\mathcal{M}} $ which generate ${\mathcal{M}}.$
We keep all notations introduced in the previous sections.

\begin{definition}
Let $\alpha=(\alpha_{n})\subset\mathbb{C}$ be
a sequence. Define a map $T_{\alpha}$  on the family of martingale
difference sequences by  $T_{\alpha}(dx)=(\alpha_{n}dx_{n})$.
$T_{\alpha}$ is called the martingale transform of symbol $\alpha.$
\end{definition}

It is clear that $(\alpha_{n}dx_{n})$ is indeed a martingale
difference sequence. The corresponding martingale is
\be
T_{\alpha}(x)=\sum_n \alpha_{n}dx_{n}.
\ee
The first application of Theorem \ref{th:Inter} is to obtain $\Phi$-moment inequalities of martingale transforms as follows.

\begin{theorem}\label{th:MarTrans} Let $\alpha=(\alpha_{n})\subset\mathbb{C}$ be
a bounded sequence and $T_{\alpha}$ the associated martingale
transform. Let $\Phi$ be an Orlicz function with $1<p_{\Phi} \leq
q_{\Phi}<\infty.$ Then, there is a positive constant $C_{\Phi, \alpha}$
such that for all bounded $L_{\Phi}$-martingales $x=(x_{n})$, we
have
\begin {equation}\label{int-trans}
\tau(\Phi(|T_{\alpha}(x)|)) \leq C_{\Phi, \alpha}\tau(\Phi(| x |)),
\end{equation}
where $C_{\Phi, \alpha}$ depends only on $\Phi$ and $\sup_n |\alpha_n|.$
\end{theorem}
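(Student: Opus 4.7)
The plan is to derive Theorem \ref{th:MarTrans} as a direct application of the noncommutative Marcinkiewicz interpolation theorem (Theorem \ref{th:Inter}) just established, using as endpoint inputs the known $L_p$-boundedness of noncommutative martingale transforms for $1 < p < \infty$.

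First I would set the stage. Since $1 < p_\Phi \le q_\Phi < \infty$, the Orlicz space $L_\Phi(\mathcal{M})$ is reflexive, so by Remark \ref{re:conver}(2) any bounded $L_\Phi$-martingale $x = (x_n)$ converges to some $x_\infty \in L_\Phi(\mathcal{M})$ with $x_n = \mathcal{E}_n(x_\infty)$. Accordingly, I would view $T_\alpha$ as the linear map $x_\infty \mapsto \sum_n \alpha_n(\mathcal{E}_n - \mathcal{E}_{n-1})(x_\infty)$, which is linear and therefore sublinear in the sense required by Theorem \ref{th:Inter}. Under this identification, $\tau(\Phi(|x|)) = \tau(\Phi(|x_\infty|))$ and $\tau(\Phi(|T_\alpha(x)|)) = \tau(\Phi(|T_\alpha(x_\infty)|))$, so we are reduced to proving \eqref{int-trans} for $x_\infty \in L_\Phi(\mathcal{M})$.

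The key input is: for each $1 < p < \infty$ there exists a constant $K_p$ such that $\|T_\alpha(y)\|_p \le K_p\, (\sup_n|\alpha_n|)\, \|y\|_p$ for all $y \in L_p(\mathcal{M})$. This is a classical consequence of the noncommutative Burkholder--Gundy inequality of Pisier--Xu \cite{PX1}: since $|\alpha_n dy_n|^2 \le (\sup_m|\alpha_m|)^2 |dy_n|^2$, both the column square function $S_C(T_\alpha(y))$ and the row square function $S_R(T_\alpha(y))$ are dominated pointwise by $\sup_n|\alpha_n|$ times those of $y$, and the Burkholder--Gundy equivalence $\|y\|_p \approx \max(\|S_C(y)\|_p, \|S_R(y)\|_p)$ for $p \ge 2$, together with its dual sum-norm form for $1 < p \le 2$, yields the stated $L_p$-bound. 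By Remark \ref{re:Inter}(1), strong type $(p,p)$ implies weak type $(p,p)$.

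Finally, I would choose any $1 < p_0 < p_\Phi$ and $q_\Phi < p_1 < \infty$. Then $T_\alpha$ is simultaneously of weak types $(p_0, p_0)$ and $(p_1, p_1)$ with constants depending only on $p_0$, $p_1$ and $\sup_n|\alpha_n|$, and Theorem \ref{th:Inter} immediately yields $\tau(\Phi(|T_\alpha(x_\infty)|)) \le C\, \tau(\Phi(|x_\infty|))$ with $C$ depending only on $\Phi$ and $\sup_n|\alpha_n|$, which is the desired inequality \eqref{int-trans}. The main potential obstacle is locating the $L_p$-boundedness of noncommutative martingale transforms in the literature, but as sketched it reduces to an elementary consequence of Pisier--Xu's square function equivalence; beyond that, the proof is essentially mechanical once the interpolation machinery of Section \ref{inter} is in place.
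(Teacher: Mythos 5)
Your proposal is correct and follows essentially the same route as the paper: choose $1<p_0<p_\Phi\le q_\Phi<p_1<\infty$, invoke the $L_p$-boundedness of $T_\alpha$ (the paper simply cites Pisier--Xu, Remark~2.4, whereas you spell out the derivation from the square-function domination), and then apply Theorem~\ref{th:Inter}. The extra preliminary reduction to $x_\infty\in L_\Phi(\mathcal{M})$ via reflexivity is sound and harmless, just more explicit than the paper's one-line argument.
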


\begin{proof} Let $1<p<p_{\Phi}\leq q_{\Phi}<q<\infty.$ As the consequence of the noncommutative Burkholder-Gundy inequality as proved in Pisier-Xu \cite{PX1} (see Remark 2.4 there) we have
\be
T_{\alpha}:L_{p}({\mathcal{M}})+L_{q}({\mathcal{M}})\rightarrow
L_{p}({\mathcal{M}})+L_{q}({\mathcal{M}})
\ee with $\|T_{\alpha}\|_{p}\leq C_{p, \alpha}$ and $\|T_{\alpha}\|_{q}\leq C_{q, \alpha},$ where $C_{p, \alpha}, C_{q, \alpha}$ are both positive constants depending only on $p, q$ and $\sup_n |\alpha_n|.$ Then, it follows from Theorem \ref{th:Inter} that there is a constant $C_{\Phi, \alpha}$ such that
\be
\tau(\Phi(|T_{\alpha} (x)|))\leq C_{\Phi, \alpha} \tau(\Phi(|x|)),
\ee
as required.
\end{proof}

\begin{remark}
It is proved by Randrianantoanina \cite{R1} that $T_{\alpha}$ is of weak type $(1,1),$ from which we also conclude Theorem \ref{th:MarTrans}.
\end{remark}

\begin{corollary}\label{cor:MarDiff}
Let $\Phi$ be an Orlicz function with $1<p_{\Phi}\leq q_{\Phi}<\infty
.$ Then,
\begin{equation}\label{int-diff}
\tau \Big ( \Phi \Big ( \big |\sum \varepsilon _{n}dx_{n} \big | \Big ) \Big )
\approx \tau \Big ( \Phi \Big ( \big | \sum dx_{n} \big | \Big ) \Big ),\; \forall \varepsilon
_{n}=\pm 1
\end{equation}
for all bounded $L_{\Phi}$-martingales $x=(x_{n}),$ where $``\approx"$ depends only on $\Phi.$
\end{corollary}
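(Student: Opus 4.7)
The plan is to apply Theorem \ref{th:MarTrans} twice, once in each direction, using that the symbol $\alpha_n = \varepsilon_n = \pm 1$ is bounded and, crucially, squares to $1$.

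For the upper bound, I would apply Theorem \ref{th:MarTrans} directly to the martingale $x=(x_n)$ with symbol $\alpha = (\varepsilon_n)$. Since $\sup_n |\varepsilon_n| = 1$, the constant $C_{\Phi,\alpha}$ can be taken to depend only on $\Phi$, and we obtain
\[
\tau \Big ( \Phi \Big ( \big |\textstyle\sum \varepsilon _{n}dx_{n} \big | \Big ) \Big ) = \tau(\Phi(|T_{\varepsilon}(x)|)) \leq C_{\Phi} \, \tau \Big ( \Phi \Big ( \big |\textstyle\sum dx_{n} \big | \Big ) \Big ).
\]

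For the lower bound, the key observation is that $\varepsilon_n^2 = 1$. I would consider the martingale $y=(y_n)$ defined by $y_n = \sum_{k \leq n} \varepsilon_k dx_k$, whose martingale difference sequence is $dy_k = \varepsilon_k dx_k$ (this is again a martingale since multiplication by a scalar commutes with the conditional expectations $\mathcal{E}_k$). Applying $T_{\varepsilon}$ to $y$ gives
\[
T_{\varepsilon}(y) = \sum_k \varepsilon_k \, dy_k = \sum_k \varepsilon_k^2 \, dx_k = \sum_k dx_k = x_\infty,
\]
so a second application of Theorem \ref{th:MarTrans} yields
\[
\tau \Big ( \Phi \Big ( \big |\textstyle\sum dx_{n} \big | \Big ) \Big ) = \tau(\Phi(|T_{\varepsilon}(y)|)) \leq C_{\Phi} \, \tau(\Phi(|y|)) = C_{\Phi} \, \tau \Big ( \Phi \Big ( \big |\textstyle\sum \varepsilon _{n}dx_{n} \big | \Big ) \Big ).
\]

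There is really no obstacle here — the corollary is a formal consequence of Theorem \ref{th:MarTrans} combined with the involutivity of the map $dx_n \mapsto \varepsilon_n dx_n$. The only minor point to verify is that $y=(y_n)$ is a bounded $L_\Phi$-martingale whenever $x$ is, which follows because $\tau(\Phi(|y_n|))$ is controlled by $\tau(\Phi(|x_n|))$ via the first inequality applied to the finite-length truncations. Combining the two bounds gives the desired equivalence with constants depending only on $\Phi$.
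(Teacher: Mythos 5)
Your proof is correct and is exactly the intended argument: the paper states Corollary~\ref{cor:MarDiff} as an immediate consequence of Theorem~\ref{th:MarTrans} without writing it out, and the two-sided application of the martingale transform inequality with symbol $(\varepsilon_n)$, using $\varepsilon_n^2=1$ for the reverse direction, is precisely what is meant. Your side remark that $y=T_\varepsilon(x)$ is a bounded $L_\Phi$-martingale (via finite truncations and the $\Delta_2$-condition, or via the known norm version of the transform inequality) correctly handles the one point one might pause on.
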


Recall that a Banach space $X$ is called a UMD space if for some
$q\in(1,\infty)$ (or equivalently, for every $q \in(1,\infty)$) there
exists a constant $C$ such that for any finite $L_{q}$-martingales
$f$ with values in $X$ one has
\be
\Big \| \sum \varepsilon _{n}df_{n} \Big \|_{L_{q}(\Omega;X)} \leq
C \sup_{n\ge1} \|f_{n} \|_{L_{q}(\Omega;X)} ,\; \forall \varepsilon
_{n}=\pm 1.
\ee
Then, a Banach space $X$ is a UMD space if and only if for any $L_{\8}$-bounded Walsh-Paley martingale $f$ with values in $X$, the series $\sum \varepsilon _{n}df_{n}$ converges in probability (cf., see \cite{Liu1}).

Let $(\Omega,\mathcal {F}, P)$ be a probability space equipped with
$(\mathcal {F}_{n})$ a filtration of $\sigma$-subalgebras  of
$\mathcal {F}$ such that $\cup\mathcal {F}_{n}$ generates $\mathcal
{F}$. Let $(\mathcal{N},\nu)$ be a noncommutative probability
space. Put $\mathcal{M}=L_{\infty}(\Omega,\mathcal {F},
P)\overline{\otimes}\mathcal{N}$ equipped with the tensor product
trace, and $\mathcal{M}_{n}=L_{\infty}(\Omega,\mathcal {F}_{n},
P)\overline{\otimes}\mathcal{N}$ for every $n$. Then $(\mathcal
{M}_{n})$ is a filtration of von Neumann subalgebras  of $\mathcal
{M}$. Recall that
$L_{p}(\mathcal{M})=L_{p}(\Omega;L_{p}(\mathcal{N}))$ for all
$0<p<\infty$. In this case,  the noncommutative $L_{p}$-martingales
with respect to $(\mathcal{M}_{n})$  coincide with the usual
$L_{p}$-martingales with respect to $(\mathcal{F}_{n})$ but with
values in $L_{p}(\mathcal{N})$. Hence, by \eqref{int-diff}, for all
bounded $L_{\Phi}$-martingales $f=(f_{n})$  with values in
$L_{\Phi}(\mathcal{N})$ , we have
\begin {equation}\label{int-umd}
\int_{\Omega} \nu \Big ( \Phi \Big ( \Big | \sum \varepsilon _{n}df_{n} \Big | \Big ) \Big ) d P \approx \int_{\Omega} \nu \big ( \Phi(|f|) \big ) d P ,\; \forall \varepsilon _{n}=\pm 1,
\end{equation}
where $``\approx"$ depends only on $\Phi.$

\begin{corollary}\label{cor:UMD} Let $(\mathcal{N},\nu)$ be a noncommutative probability space and $\Phi$ an Orlicz function with $1<p_{\Phi}\leq q_{\Phi}<\infty.$ Then $L_{\Phi}(\mathcal{N})$ is a UMD space.
\end{corollary}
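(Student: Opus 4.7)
The plan is to verify the equivalent characterization of UMD recalled just before the corollary: a Banach space $X$ is UMD if and only if for every $L_{\infty}$-bounded Walsh--Paley martingale $f$ with values in $X$, the series $\sum \varepsilon_{n} df_{n}$ converges in probability. I shall check this directly for $X = L_{\Phi}(\mathcal{N})$ by exploiting \eqref{int-umd}.

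So let $f = (f_{n})$ be an $L_{\infty}$-bounded Walsh--Paley martingale with values in $L_{\Phi}(\mathcal{N})$, put $M = \sup_{n} \|f_{n}\|_{L_{\infty}(\Omega; L_{\Phi}(\mathcal{N}))}$, and set $S_{n} = \sum_{k=0}^{n} \varepsilon_{k} df_{k}$. Since $\|f_{n}(\omega)\|_{L_{\Phi}(\mathcal{N})} \le M$ almost surely, the Luxemburg-norm definition gives $\nu(\Phi(|f_{n}(\omega)|/M)) \le 1$ a.s., and the $\Delta_{2}$-property of $\Phi$ upgrades this to a uniform bound $\int_{\Omega} \nu(\Phi(|f_{n}|))\,dP \le C(M)$. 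Applying \eqref{int-umd} to the truncation of $f$ at level $n$ (formally, replace $\varepsilon_{k}$ by $0$ for $k > n$) I obtain $\int_{\Omega} \nu(\Phi(|S_{n}|))\,dP \le C'$. As $(S_{n})$ is itself an $(\mathcal{M}_{n})$-adapted martingale in $L_{\Phi}(\mathcal{M})$ with $\mathcal{M} = L_{\infty}(\Omega)\,\overline{\otimes}\,\mathcal{N}$, this shows $(S_{n})$ is a bounded $L_{\Phi}$-martingale. By Remark 2.1(2) the space $L_{\Phi}(\mathcal{M})$ is reflexive, so $(S_{n})$ converges in $L_{\Phi}(\mathcal{M})$-norm to some $S_{\infty} \in L_{\Phi}(\mathcal{M})$.

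The remaining step, which I expect to be the main technical obstacle, is transferring $L_{\Phi}(\mathcal{M})$-norm convergence into $P$-probability convergence of $(S_{n})$ to $S_{\infty}$, viewed as $L_{\Phi}(\mathcal{N})$-valued random variables. Two elementary consequences of $\Phi \in \Delta_{2}$ should do the job. First, the convexity bound $\Phi(ct) \le c\Phi(t)$ for $c \in [0,1]$ shows that any Luxemburg-norm bound $\|y\|_{L_{\Phi}(\mathcal{M})} \le c < 1$ implies $\int_{\Omega} \nu(\Phi(|y|))\,dP \le c$; hence $\int_{\Omega} \nu(\Phi(|S_{n} - S_{\infty}|))\,dP \to 0$, and Markov's inequality applied to the nonnegative random variable $\omega \mapsto \nu(\Phi(|S_{n}(\omega) - S_{\infty}(\omega)|))$ gives $P\{\omega : \nu(\Phi(|S_{n}(\omega) - S_{\infty}(\omega)|)) > \delta\} \to 0$ for every $\delta > 0$. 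Second, the upper-growth estimate $\Phi(\lambda t) \le C\lambda^{q_{\Phi} + \eta}\Phi(t)$ for $\lambda \ge 1$ (a reformulation of $q_{\Phi} < \infty$) converts a $\Phi$-moment bound $\nu(\Phi(|y|)) \le \delta \le 1$ into the Luxemburg-norm bound $\|y\|_{L_{\Phi}(\mathcal{N})} \le C\delta^{1/(q_{\Phi}+\eta)}$. Combining these, $P\{\omega : \|S_{n}(\omega) - S_{\infty}(\omega)\|_{L_{\Phi}(\mathcal{N})} > \epsilon\} \to 0$ for every $\epsilon > 0$, which is the required convergence in probability and completes the verification that $L_{\Phi}(\mathcal{N})$ is UMD.
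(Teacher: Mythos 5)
Your proof is correct and follows essentially the same route as the paper: apply \eqref{int-umd} to bound the $\Phi$-moments of the partial sums, conclude that $(S_n)$ is a bounded $L_{\Phi}(\mathcal{M})$-martingale, and invoke reflexivity (Remark \ref{re:conver}(2)) to get convergence. You supply more detail than the paper does on the final step --- converting $L_{\Phi}(\mathcal{M})$-norm convergence into $P$-probability convergence of $L_{\Phi}(\mathcal{N})$-valued random variables via the two elementary $\Delta_2$-estimates --- which the paper compresses into the bare assertion that the series ``converges almost everywhere.''
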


\begin{proof} Let $f=(f_{n})$ be a $L_{\8}$-bounded Walsh-Paley martingale
with values in $L_{\Phi} ( \mathcal{N}).$ By \eqref{int-umd}, we have
\be
\int_{\Omega} \nu \Big ( \Phi \Big ( \Big | \sum \varepsilon _{n}df_{n} \Big | \Big ) \Big ) d P \leq
C \int_{\Omega} \nu \big ( \Phi(|f|) \big ) d P ,\; \forall \varepsilon _{n}=\pm
1,
\ee
from which it follows that $\Phi(|\sum \varepsilon _{n}df_{n}|)<\infty$  a.e., or $\|\sum \varepsilon _{n}df_{n}\|_{\Phi}<\infty$ a.e.. Therefore, by Remark \ref{re:conver} (2), the series $\sum \varepsilon _{n}df_{n}$
converges almost everywhere. This yields that $L_{\Phi}(\mathcal{N})$ is a UMD space.
\end{proof}

\begin{remark}\label{UMDsemiinfinite}
The above result on the UMD property of  $L_{\Phi}(\mathcal{N})$
remains true when $\nu$ is a normal semifinite faithful trace and $1<p_{\Phi}\leq
q_{\Phi}<\infty.$ Indeed, there exists an increasing family $(e_{j})_{j\in
J}$ of projection of $\mathcal{N}$ such that $\nu(e_{j})<\infty$
for every $j\in J$ and such that $e_{j}$ converges to the unit
element of $\mathcal{N}$ in the strong operator topology. Hence,
$\nu(e_{j}\Phi(|x|))\rightarrow\nu(\Phi(|x|))$  for any
$x\in L_{\Phi}(\mathcal{N})$, since $\Phi(|x|)\in
L_{1}(\mathcal{N})$. Therefore, by approximation, one can easily
reduce the semifinite case to the finite one. Alternately, the
preceding argument continues to work for normal semifinite trace
$\nu$ on $\mathcal{N}$ because the subalgebras
$\mathcal{M}_{n}=L_{\infty}(\Omega,\mathcal {F}_{n},
P)\overline{\otimes}\mathcal{N}$ of
$\mathcal{M}=L_{\infty}(\Omega,\mathcal {F},
P)\overline{\otimes}\mathcal{N}$ satisfy the condition in
Remark \ref{re:conver} (3).
\end{remark}

At the end of this section, by our interpolation result Theorem \ref{th:Inter} we easily obtain the following noncommutative analogue of the Stein inequality for Orlicz spaces.

\begin{theorem}\label{th:Stein} Let $\Phi$ be an Orlicz function with $1<p_{\Phi}\leq
q_{\Phi}<\infty $ and $a=(a_{n})_{n\geq 0}$  a finite sequence in
$L_{\Phi}({\mathcal{M}})$. Then, there exists a constant $C_{\Phi}$ such that
\be
\tau \Big ( \Phi \Big [ \Big ( \sum_{n} |{\mathcal{E}}_{n}(a_{n}) |^{2} \Big )^{\frac{1}{2}} \Big ] \Big ) \leq
C_{\Phi} \tau \Big ( \Phi \Big [ \Big ( \sum_{n}|a_{n}|^{2} \Big )^{\frac{1}{2}} \Big ] \Big ).
\ee
Similar assertion holds for the row subspace
$L_{\Phi}(\mathcal{M}; \el_{R}^{2}).$
\end{theorem}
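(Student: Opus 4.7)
The plan is to realise the sequence map $a \mapsto ({\mathcal{E}}_n(a_n))$ as a single sublinear operator on $L_0(\N)$, where $\N = \M \otimes \mathcal{B}(\el^2)$ carries the product trace $\tau \otimes \tr$, and then invoke the interpolation theorem (Theorem \ref{th:Inter}). Concretely, for $A \in L_0(\N)$ let $P(A) = A e_{00}$ be the projection onto the first column, which is a contraction on every $L_p(\N)$ by noncommutative H\"older. Define
\beqn
\tilde T(A) = \sum_n {\mathcal{E}}_n(a_{n,0}) \otimes e_{n,0},
\eeqn
the column matrix whose $n$-th entry is ${\mathcal{E}}_n(a_{n,0})$; then $\tilde T$ is linear (hence sublinear in the sense of Section \ref{inter}) and coincides on column matrices with the Stein map of interest.

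By the classical noncommutative Stein inequality of Pisier--Xu (\cite{PX1}), for every $1 < p < \infty$ the column Stein map is bounded on $L_p(\M, \el^2_C)$ with some constant $C_p$. Combined with the contractivity of $P$, this gives $\|\tilde T(A)\|_{L_p(\N)} \le C_p \|A\|_{L_p(\N)}$ for every $A \in L_p(\N)$; hence $\tilde T$ is of strong, and by Remark \ref{re:Inter}(1) therefore weak, type $(p,p)$ for every $1 < p < \infty$.

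Now choose $p_0, p_1$ with $1 < p_0 < p_\Phi \le q_\Phi < p_1 < \infty$ and apply Theorem \ref{th:Inter}: there exists $C_\Phi > 0$ such that
\beqn
(\tau \otimes \tr)(\Phi(|\tilde T(A)|)) \le C_\Phi (\tau \otimes \tr)(\Phi(|A|)), \qquad A \in L_\Phi(\N).
\eeqn
Specialising to the column matrix $A$ corresponding to the finite sequence $(a_n)$, both $|A|$ and $|\tilde T(A)|$ are supported at position $(0,0)$ with respective values $(\sum_n |a_n|^2)^{1/2}$ and $(\sum_n |{\mathcal{E}}_n(a_n)|^2)^{1/2}$, and the identity $(\tau \otimes \tr)(x \otimes e_{00}) = \tau(x)$ converts the above into the desired column inequality. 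The row statement is proved by the same argument applied to the row embedding $a \mapsto \sum_n a_n \otimes e_{0,n}$ together with the row version of Pisier--Xu's Stein inequality.

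The only real hurdle is the operator theoretic packaging: Theorem \ref{th:Inter} requires a sublinear operator defined on all of $L_0(\N)$, whereas the Stein map is naturally defined only on column (resp.\ row) matrices. This is overcome by pre-composition with the $1$-bounded column projection $P$, exploiting the $1$-complementation of the column subspace in $L_p(\N)$; once this set-up is in place, the rest is a direct combination of Pisier--Xu with Theorem \ref{th:Inter}.
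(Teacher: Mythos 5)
Your proof is correct and follows essentially the same route as the paper's: both realise the Stein map as the linear operator on $L_p(\M\otimes\B(\el^2))$ that reads only the first column of the input matrix and applies $\E_n$ to its $n$-th entry, then invoke Theorem 2.3 of Pisier--Xu for the $L_p$-boundedness and feed this into Theorem~\ref{th:Inter}. Your write-up is somewhat more explicit (spelling out the column projection $P$, the weak-type deduction via Remark~\ref{re:Inter}(1), and the trace identity $(\tau\otimes\tr)(\cdot\otimes e_{00})=\tau(\cdot)$), but the underlying argument is identical.
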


\begin{proof} Let us consider the von Neumann algebra tensor
product ${\mathcal{M}}\otimes{ \mathcal{B}}(\el^{2})$ with the product
trace $\tau \otimes \mathrm{tr}.$ Evidently, $\tau \otimes \mathrm{tr}$ is a semi-finite
normal faithful trace. Let $L_{\Phi}({\mathcal{M}}\otimes
{\mathcal{B}}(\el^{2}))$ be the associated non-commutative $L_{\Phi}$
space. Then, $L_{\Phi}({\mathcal{M}}\otimes {\mathcal{B}}(\el^{2}))$ is
an interpolation space for the couple $(L_{p}({\mathcal{M}}\otimes
{\mathcal{B}}(\el^{2})),L_{q}({\mathcal{M}}\otimes
{\mathcal{B}}(\el^{2}))),$ where $1<p<p_{\Phi}\leq q_{\Phi}<q<\infty.$
We define
$$
T:L_{p}({\mathcal{M}}\otimes
{\mathcal{B}}(\el^{2}))+L_{q}({\mathcal{M}}\otimes
{\mathcal{B}}(\el^{2}))\rightarrow L_{p}({\mathcal{M}}\otimes
{\mathcal{B}}(\el^{2}))+L_{q}({\mathcal{M}}\otimes
{\mathcal{B}}(\el^{2})),
$$
by
$$T \left(
\begin{matrix} a_{11} & \ldots & a_{1n} & \ldots \\
a_{21} & \ldots & a_{2n} & \ldots \\
\vdots & \vdots & \vdots & \vdots \\
a_{n1} & \ldots & a_{nn} & \ldots \\
\vdots & \vdots & \vdots & \ddots \end{matrix}\right) = \left(
\begin{matrix}
{\mathcal{E}}_{n}(a_{11}) & 0 & 0 & \ldots\\
{\mathcal{E}}_{n}(a_{21}) & 0 & 0 & \ldots \\
\vdots & \vdots & \vdots       & \vdots \\
{\mathcal{E}}_{n}(a_{n1}) & 0 & 0 & \ldots\\
 \vdots & \vdots & \vdots & \ddots
 \end{matrix}\right).
 $$
Theorem 2.3 in \cite{PX1} gives that $T$ is a bounded linear operator on both $L_{p}({\mathcal{M}}\otimes
{\mathcal{B}}(\el^{2}))$ and $L_{q}({\mathcal{M}}\otimes
{\mathcal{B}}(\el^{2})).$ Thus, by Theorem \ref{th:Inter} we obtain the
desired result.
\end{proof}

\begin{remark}\label{re:Stein}
The noncommutative analogue of the classical Stein inequality in $L_p$-spaces is first presented in \cite{PX1}, which is one of key ingredients in their proof of the noncommutative Burkholder-Gundy inequality.
\end{remark}

\section{$\Phi$-moment Khintchine's inequalities}\label{khint}

In this section, we will prove a noncommutative analogue of $\Phi$-moment Khintchine's inequality for Rademacher's random variables.

Let $\mathbb{T}$ be the unit circle of the complex plane equipped
with the normalized Haar measure denoted by $d m.$  Let $\mathcal{M}$
be a finite von Neumann algebra with a normalized normal faithful
trace $\tau.$ Put
$\mathcal{N}=L_{\infty}(\mathbb{T})\overline{\otimes}\mathcal{M}$
equipped with the tensor product trace $\nu=\int\otimes\tau$  and
$\mathcal{A}= \mathcal{H}_{\infty}(\mathbb{T})\overline{\otimes}\mathcal{M}.$
Then, $\mathcal {A}$ is a finite maximal subdiagonal algebras of
$\mathcal {N}$ with respect to $\mathcal{E}=\int\otimes
I_{\mathcal{M}}:\mathcal{N}\rightarrow\mathcal{M}$ (e.g., see \cite{BX}).

\begin{lemma}\label{le:Riesz}  Let $\Phi$ be an Orlicz function with $1<p_{\Phi}\leq
q_{\Phi}<\infty.$ Let $\Phi^{(2)}(t)=\Phi(t^{2}).$ Then, for any
$f \in \mathcal{H}_{\Phi}({\mathcal{N}})$ and $\varepsilon>0,$ there exist two
functions $g,h\in \mathcal{H}_{\Phi^{(2)}}({\mathcal{N}})$ such that
$f=gh$ with
\be
\max \Big \{ \int_{\mathbb{T}} \tau \big [ \Phi (|g|^2) \big ] d m,\; \int_{\mathbb{T}} \tau \big [ \Phi (|h|^2) \big ]d m \Big \} \le \int_{\mathbb{T}} \tau \big ( \Phi(|f|) \big ) d m + \varepsilon.
\ee
\end{lemma}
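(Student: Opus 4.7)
The plan is to adapt the classical Riesz factorization theorem for $H^{p}$-spaces to the present noncommutative Orlicz setting, via the Szeg\H{o}--Arveson outer factorization theorem for the finite subdiagonal algebra $\mathcal{A}=\mathcal{H}_{\infty}(\mathbb{T})\overline{\otimes}\mathcal{M}\subseteq\mathcal{N}$. The guiding idea is to produce an outer element $h\in\mathcal{A}$ with $h^{*}h=|f|$, and then set $g:=fh^{-1}$. Then $f=gh$ trivially, and a short calculation gives
\be
|g|^{2}=g^{*}g=(h^{*})^{-1}f^{*}fh^{-1}=(h^{*})^{-1}(h^{*}h)^{2}h^{-1}=hh^{*},
\ee
so by traciality of $\nu=\int\otimes\tau$ the two $\Phi^{(2)}$-moments both agree with that of $|f|$:
\be
\int_{\mathbb{T}}\tau[\Phi(|g|^{2})]dm=\int_{\mathbb{T}}\tau[\Phi(hh^{*})]dm=\int_{\mathbb{T}}\tau[\Phi(h^{*}h)]dm=\int_{\mathbb{T}}\tau(\Phi(|f|))dm,
\ee
and analogously for $|h|^{2}=|f|$. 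In the idealized situation this yields the conclusion even without the $\varepsilon$-slack.

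To make this rigorous I would first reduce to a dense subclass. Since $\Phi\in\Delta_{2}$ (because $q_{\Phi}<\infty$), the functional $x\mapsto\nu(\Phi(|x|))$ is continuous on $L_{\Phi}(\mathcal{N})$, so it suffices to work with $f\in\mathcal{A}$ for which $|f|$ is bounded and bounded below by some $\delta>0$, and then absorb the approximation error into $\varepsilon$. For such $f$, $\log|f|\in L_{\infty}(\mathcal{N})\subseteq L_{1}(\mathcal{N})$, so the Fuglede--Kadison determinant of $|f|$ is strictly positive and Arveson's factorization theorem for finite subdiagonal algebras yields an outer $h\in\mathcal{A}$ with $h^{*}h=|f|$; invertibility of $|h|^{2}=|f|$ in $\mathcal{N}$ makes $g:=fh^{-1}$ a well-defined element of $\mathcal{N}$, and the computation above runs unchanged.

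The main obstacle is to verify that $g=fh^{-1}$ is genuinely analytic, i.e., belongs to $\mathcal{H}_{\Phi^{(2)}}(\mathcal{N})$ rather than merely to $L_{\Phi^{(2)}}(\mathcal{N})$. This is precisely what outerness of $h$ provides: by the noncommutative Beurling-type characterization of outer elements in subdiagonal algebras, $h\mathcal{A}$ is $L_{2}$-norm dense in the analytic space $\mathcal{H}^{2}(\mathcal{N})$, which forces $fh^{-1}$ to remain on the analytic side whenever $f$ does; equivalently, one verifies via the conditional expectation $\mathcal{E}=\int\otimes I_{\mathcal{M}}$ that the negative Fourier modes of $fh^{-1}$ vanish. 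Once $g\in\mathcal{H}_{\Phi^{(2)}}(\mathcal{N})$ is established, a routine approximation argument using $\Delta_{2}$-continuity of $\nu\circ\Phi$ transfers the factorization to arbitrary $f\in\mathcal{H}_{\Phi}(\mathcal{N})$, absorbing the approximation error into the $\varepsilon$.
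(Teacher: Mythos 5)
Your core device --- outer factorization in the subdiagonal algebra $\mathcal{A}$ of a positive operator comparable to $|f|$, followed by $g:=fh^{-1}$ --- is the same one the paper uses, which cites Theorem 4.8 of \cite{BX} together with the intersection identities $\mathcal{H}_{p_\Phi}(\mathcal{N})\cap L_\Phi(\mathcal{N})=\mathcal{H}_\Phi(\mathcal{N})$ and $\mathcal{H}_\Phi(\mathcal{N})\cap L_{\Phi^{(2)}}(\mathcal{N})=\mathcal{H}_{\Phi^{(2)}}(\mathcal{N})$ from \cite{MW}. However, your approximation step leaves a real gap. The lemma demands an \emph{exact} factorization $f=gh$ of the given $f$, with $\varepsilon$ appearing only in the moment estimate. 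Replacing $f$ by an approximant $f'$ with $|f'|$ bounded below produces a factorization of $f'$, not of $f$; the $\Delta_2$-continuity of $x\mapsto\nu(\Phi(|x|))$ controls the right-hand side of the inequality but says nothing about reassembling the pairs $(g',h')$ into a product that equals $f$ exactly. Passing to a limit of factorizations would require a separate (and non-routine) stability or compactness argument that you have not supplied.

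The paper regularizes on the other side and thereby avoids this entirely: it keeps $f$ fixed and perturbs the positive operator to be factored, setting $w=(f^*f+\varepsilon 1)^{1/2}$, which is invertible in $\mathcal{N}$ for \emph{every} $f\in L_\Phi(\mathcal{N})$. Writing $f=vw$ with $v$ a contraction and applying \cite{BX} Theorem 4.8 to $w^{1/2}$ gives $w^{1/2}=uh$ with $u$ unitary, $h\in\mathcal{H}^{2p_\Phi}(\mathcal{N})$, and, crucially, $h^{-1}\in\mathcal{A}$. Setting $g:=vw^{1/2}u=fh^{-1}$ then factors $f$ exactly, one has $|h|^2=w$ and $|g|^2\le u^*wu$, and the $\varepsilon$-slack in the statement is exactly what is needed to absorb the gap between $\int_{\mathbb{T}}\tau(\Phi(w))\,dm$ and $\int_{\mathbb{T}}\tau(\Phi(|f|))\,dm$ as the perturbation parameter shrinks. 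Since $h^{-1}\in\mathcal{A}$ is obtained outright, $g=fh^{-1}\in\mathcal{H}_\Phi(\mathcal{N})$ is immediate and the Beurling-density detour you sketch is unnecessary; membership in $\mathcal{H}_{\Phi^{(2)}}(\mathcal{N})$ then follows from the \cite{MW} intersection identity. Adopting this regularization-on-$w$ rather than approximating $f$ itself is what closes the argument.
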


\begin{proof} Using Theorem 6.2 of \cite{MW}, we obtain
\begin{equation}
\label{intersection}
\mathcal{H}_{p_{\Phi}}({\mathcal{N}}) \cap L_{\Phi}({\mathcal{N}}) = \mathcal{H}_{\Phi}({\mathcal{N}}),\quad
\mathcal{H}_{\Phi}({\mathcal{N}}) \cap L_{\Phi^{(2)}}({\mathcal{N}}) = \mathcal{H}_{\Phi^{(2)}}({\mathcal{N}}).
\end{equation}
Let $w=(f^*f+\varepsilon)^{1/2}$. Then $w \in L_{\Phi}(\mathcal{N})$ and $w^{-1}\in\mathcal{N}.$ Let
$v\in\mathcal{N}$ be a contraction such that $f=vw$. Applying Theorem 4.8 of \cite{BX} to $w^{\frac{1}{2}},$ we have
$w^{\frac{1}{2}}=uh$, where $u$ is a unitary in $\mathcal{N}$ and
$h\in \mathcal{H}^{2 p_{\Phi}}(\mathcal{N})$ such that $h^{-1}\in\mathcal{A}$.
Set $g=vw^{\frac{1}{2}}\,u$. Then $f=gh$, so $g=fh^{-1}$. Since
$g\in \mathcal{H}_{\Phi}({\mathcal{N}})$ and $h^{-1}\in\mathcal{ A}$, $f\in
\mathcal{H}_{\Phi}({\mathcal{N}})$. By \eqref{intersection}, $g,h\in
\mathcal{H}_{\Phi^{(2)}}({\mathcal{N}}).$ The integral estimate is clear.
\end{proof}

\begin{lemma}\label{le:Fourier} Let $\Phi$ be an Orlicz function with $1<p_{\Phi}\leq
q_{\Phi}<\infty.$ Let $\{I_n =(\frac{3^n}{2},3^n]:n\in\mathbb{N}\}$  and $\triangle_{n}$ the Fourier multiplier
by the indicator function $\chi_{I_{n}},$ i.e.
\be
\triangle_{n}(f)(z) = \sum_{k \in I_{n}} \hat{f}(k)z^{k}
\ee
for any trigonometric polynomial $f$ with coefficients in $
L_{\Phi}({\mathcal{M}}).$ Then, there exists a constant $C_{\Phi}>0$
such that
\be
\int_{\mathbb{T}} \tau \Big ( \Phi \Big [ \Big ( \sum_{n}\triangle_{n}(f)^{*}\triangle_{n}(f) \Big )^{\frac{1}{2}} \Big ] \Big )d m \le
C_{\Phi}\int_{\mathbb{T}} \tau \big ( \Phi(|f|) \big )d m,
\ee
for any $ f \in L_{\Phi}({\mathcal{M}}).$
\end{lemma}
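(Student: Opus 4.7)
The strategy is to apply the Marcinkiewicz-type interpolation Theorem \ref{th:Inter} to a suitable sublinear (in fact linear) operator built from the blocks $\triangle_n$. Define, on trigonometric polynomials $f$ with coefficients in $L_\Phi(\M)$, the linear map
\beq
T(f) = \sum_{n} \triangle_n(f) \otimes e_{n, 0},
\eeq
viewed as an element of $L_0(\N \ot \B(\el^2))$, i.e.\ the column matrix whose $n$-th entry is $\triangle_n(f)$. With this identification,
\beq
\| T(f) \|_{L_p(\N \ot \B(\el^2))} = \Big \| \Big ( \sum_n |\triangle_n(f)|^2 \Big )^{1/2} \Big \|_{L_p(\N)},\quad 1\le p \le \infty,
\eeq
and since $T$ is linear it is trivially sublinear in the sense of Definition \ref{pre}.1 (take $u=v=1$).

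Choose exponents $1 < p_0 < p_\Phi \le q_\Phi < p_1 < \infty$. The blocks $I_n = (3^n/2, 3^n]$ form a Hadamard gap sequence, so the noncommutative Littlewood--Paley inequality for lacunary Fourier multipliers --- established by Pisier--Xu \cite{PX1} as a consequence of the noncommutative Khintchine inequality of Lust-Piquard--Pisier, via a standard transference from Rademacher variables to the lacunary trigonometric system --- provides constants $A_{p_0}, A_{p_1}$ such that
\beq
\| T(f) \|_{L_{p_i}(\N \ot \B(\el^2))} \le A_{p_i} \| f \|_{L_{p_i}(\N)},\quad i=0,1.
\eeq
By Remark \ref{re:Inter}(1), these strong type bounds imply that $T$ is of weak types $(p_0, p_0)$ and $(p_1, p_1)$ with respect to the traces $\nu_1 = \int \ot \tau$ on $\N$ and $\nu_2 = \int \ot \tau \ot \tr$ on $\N \ot \B(\el^2)$.

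Since $p_0 < p_\Phi \le q_\Phi < p_1$, Theorem \ref{th:Inter} applies and yields a constant $C_\Phi > 0$, depending only on $p_0,p_1$ and $\Phi$, such that
\beq
\nu_2 \big( \Phi(|T(f)|) \big) \le C_\Phi \, \nu_1 \big ( \Phi(|f|) \big)
\eeq
for every $f \in L_\Phi(\N)$. Unpacking the left-hand side using the column-matrix structure of $T(f)$ gives exactly
\beq
\int_\T \tau \Big( \Phi \Big[ \Big( \sum_n \triangle_n(f)^* \triangle_n(f) \Big)^{1/2} \Big] \Big) dm \le C_\Phi \int_\T \tau \big(\Phi(|f|)\big) dm,
\eeq
as required. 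The main obstacle is the $L_p$ boundedness of $T$ for the two bracketing exponents: once that is in hand the argument is just a direct application of the interpolation theorem. Invoking the lacunary noncommutative Littlewood--Paley estimate from \cite{PX1} keeps this step short, but it is the step carrying the real analytic content; any alternative proof (e.g.\ via a martingale model for the lacunary blocks together with Theorem \ref{th:Stein} and Corollary \ref{cor:MarDiff}) would have to recover the same $L_p$ inequalities before interpolation can be applied.
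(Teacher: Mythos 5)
Your argument is essentially the paper's: both recast the lacunary square function as the modulus of a (sub)linear map into a column subspace, obtain the bracketing $L_{p_0},L_{p_1}$ bounds from the known noncommutative lacunary Littlewood--Paley estimate, and then apply Theorem \ref{th:Inter}. The only discrepancy is the attribution of that $L_p$ estimate: the paper cites Bourgain \cite{Bo} (and the proof of Theorem III.1 of \cite{LPP}) rather than Pisier--Xu \cite{PX1}, though the underlying transference from Rademacher variables is the same.
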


\begin{proof} Let
$\mathcal{N}=L_{\infty}(\mathbb{T})\bar{\otimes}\mathcal{M}$
equipped with the tensor product trace $\nu=\int\otimes\tau,\;1<p<\infty,$ then
 $L_{p}(\mathbb{T},L_{p}({\mathcal{M}}))=L_{p}({\mathcal{N}}).$ By Theorem 4 of \cite{Bo} (see also the proof of Theorem {\rm III.1} of \cite{LPP}) there exists a constant $C_{p}>0$ such that for all $f \in L_{p}(\mathbb{T},L_{p}({\mathcal{M}})),$ we have that
\be
\Big \| \Big ( \sum_{n}\triangle_{n}(f)^{*}\triangle_{n}(f) \Big )^{\frac{1}{2}} \Big \|_{L_{p}(\mathbb{T},L_{p}({\mathcal{M}}))} \le C_{p}
\|f\|_{L_{p}(\mathbb{T},L_{p}({\mathcal{M}}))}
\ee
that is,
\be
\Big \| \Big ( \sum_{n}\triangle_{n}(f)^{*}\triangle_{n}(f) \Big )^{\frac{1}{2}} \Big \|_p \le
C_{p} \|f\|_{p},\;\forall \;f\in L_{p}({\mathcal{N}}).
\ee
Since the mapping $T: \N \mapsto \N \bar{\otimes} \mathcal{B} (\el^2)$ is sublinear, where
\be T f = \Big ( \sum_{n}\triangle_{n}(f)^{*}\triangle_{n}(f) \Big )^{\frac{1}{2}},\; \forall f \in \N, \ee
by Theorem \ref{th:Inter} we obtain the required result.
\end{proof}

Let $(\varepsilon_{n})$ be a Rademacher sequence on a probability space $(\Omega, P).$ In the sequel, without specified, $\mathcal{N}$ denotes a semifinite von Neumann algebra on a Hilbert space $\mathbb{H}$ with a normal semifinite faithful trace $\nu.$

\begin{lemma}\label{le:khinp} Let $\Phi$ be an Orlicz function. Suppose $x = (x_0, x_1, \ldots, x_n)$ is a finite sequence in $L_{\Phi} (\mathcal{N}).$
\begin{enumerate}[\rm (1)]

\item If $1< p_{\Phi} \le q_{\Phi} < 2,$ then
\begin{equation}\label{khinp1}
\begin{split}
\int_{\Omega}
\nu \Big ( & \Phi \Big [ \Big | \sum_{k=0}^{n}x_{k}\varepsilon_{k} \Big | \Big ] \Big ) d P \\
& \leq \min C_{\Phi} \Big \{ \nu \Big ( \Phi \Big [ \Big ( \sum_{k=0}^{n}
|x_{k}|^{2} \Big )^{\frac{1}{2}} \Big ] \Big ),\, \nu \Big ( \Phi \Big [ \Big ( \sum_{k=0}^{n}
|x_{k}^{*}|^{2} \Big )^{\frac{1}{2}} \Big ] \Big ) \Big \}.
\end{split}
\end{equation}
Consequently,
\begin{equation*}\label{khin1}
\begin{split}
\int_{\Omega} \nu \Big ( \Phi \Big [ & \Big | \sum_{k=0}^{n}x_{k}\varepsilon_{k} \Big | \Big ] \Big ) d P\\
& \le C_{\Phi} \inf \Big \{ \nu \Big ( \Phi \Big [ \Big ( \sum_{k=0}^{n}
|y_{k}|^{2} \Big )^{\frac{1}{2}} \Big ] \Big ) + \nu \Big ( \Phi \Big [ \Big ( \sum_{k=0}^{n}
|z_{k}^{*}|^{2} \Big )^{\frac{1}{2} } \Big ] \Big ) \Big \},
\end{split}
\end{equation*}
where the infimun runs over all decomposition $x_{k}=y_{k}+z_{k}$
with $y_{k}$ and $z_{k}$ in $L_{\Phi}({\mathcal{N}}).$

\item If  $2 < p_{\Phi}\le q_{\Phi} < \8,$ then
\begin{equation}\label{khinp2}
\begin{split}
\max \Big \{ \nu \Big ( \Phi  & \Big [ \Big ( \sum_{k=0}^{n}
|x_{k}|^{2} \Big )^{\frac{1}{2}} \Big ] \Big ),\, \nu \Big ( \Phi \Big [ \Big ( \sum_{k=0}^{n}
|x_{k}^{*}|^{2} \Big )^{\frac{1}{2}} \Big ] \Big ) \Big \}\\
& \leq \int_{\Omega}
\nu \Big ( \Phi \Big [ \Big | \sum_{k=0}^{n}x_{k}\varepsilon_{k} \Big | \Big ] \Big ) d P.
\end{split}
\end{equation}

\end{enumerate}
\end{lemma}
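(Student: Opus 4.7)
Both parts follow from the noncommutative Khintchine inequalities of Lust-Piquard at fixed $L_p$-levels, lifted to the Orlicz-modular setting via Theorem \ref{th:Inter}. Throughout, set $\mathcal{N}_1 = \mathcal{N} \overline{\otimes} \mathcal{B}(\ell^2)$ and $\mathcal{N}_2 = L_{\infty}(\Omega) \overline{\otimes} \mathcal{N}$, each endowed with the natural tensor-product trace.

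\textbf{Part (1).} Choose $1 < p_0 < p_{\Phi} \le q_{\Phi} < p_1 < 2$ and define the linear operator
\[
T : L_{0}(\mathcal{N}_1) \to L_{0}(\mathcal{N}_2), \qquad T(a) = \sum_k a_{k,0}\,\varepsilon_k,
\]
which extracts the first column $(a_{k,0})_{k \ge 0}$ of a matrix $a \in \mathcal{N}_1$. Since the first-column projection $P_C$ is contractive on every $L_{p}(\mathcal{N}_1)$ (Theorem~3.4 of~\cite{DDP2}), Lust-Piquard's Khintchine inequality at $p \in \{p_0, p_1\} \subset (1, 2)$ yields
\[
\|T a\|_{L_{p}(\mathcal{N}_2)} \le C_p \,\Big\|\Big(\sum_k |a_{k,0}|^2\Big)^{1/2}\Big\|_{p} \le C_p \|a\|_{L_{p}(\mathcal{N}_1)}.
\]
Thus $T$ is of strong type $(p_i, p_i)$ for $i = 0, 1$, and Theorem \ref{th:Inter} combined with Remark \ref{re:Inter}(1) gives $(\nu \otimes P)(\Phi(|Ta|)) \le C_{\Phi} (\nu \otimes \tr)(\Phi(|a|))$ on $L_{\Phi}(\mathcal{N}_1)$. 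Specializing to the column matrix $a = \sum_k x_k \otimes e_{k,0}$ one has $Ta = \sum_k x_k \varepsilon_k$ and $|a| = S_{C}(x) \otimes e_{0,0}$, producing the $S_C$-half of \eqref{khinp1}; the $S_R$-half follows by running the same argument with row matrices $\sum_k x_k \otimes e_{0,k}$. The decomposition consequence is then immediate from $\sum x_k \varepsilon_k = \sum y_k \varepsilon_k + \sum z_k \varepsilon_k$ together with the $\Delta_2$-estimate $\Phi(s+t) \le C_{\Phi}(\Phi(s) + \Phi(t))$.

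\textbf{Part (2).} Choose $2 < p_0 < p_{\Phi} \le q_{\Phi} < p_1 < \infty$ and define
\[
S : L_{0}(\mathcal{N}_2) \to L_{0}(\mathcal{N}_1), \qquad S(f) = \sum_k E(f \varepsilon_k) \otimes e_{k,0},
\]
where $E : \mathcal{N}_2 \to \mathcal{N}$ is the conditional expectation integrating out $\Omega$. Pairing $Sf$ (which lies in the column subspace) against a row matrix $\phi = \sum_k y_k \otimes e_{0,k} \in L_{p'}(\mathcal{N}_1)$ one computes
\[
\langle Sf, \phi \rangle = (P \otimes \nu)\Big(f \cdot \sum_k y_k \varepsilon_k\Big), \qquad \|\phi\|_{L_{p'}(\mathcal{N}_1)} = \|S_{R}(y)\|_{p'},
\]
so H\"older's inequality together with Lust-Piquard's inequality at the dual exponent $p' \in (1, 2)$ gives $\|Sf\|_{L_{p}(\mathcal{N}_1)} \le C_p \|f\|_{L_{p}(\mathcal{N}_2)}$ for $p \in \{p_0, p_1\}$. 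Theorem \ref{th:Inter} then yields $(\nu \otimes \tr)(\Phi(|Sf|)) \le C_{\Phi} (\nu \otimes P)(\Phi(|f|))$. Specializing to $f = \sum_k x_k \varepsilon_k$ produces $Sf = \sum_k x_k \otimes e_{k,0}$, so $|Sf| = S_{C}(x) \otimes e_{0,0}$, yielding the $S_C$-bound in \eqref{khinp2}. The $S_R$-bound follows by applying the same argument to the adjoint sequence $(x_k^*)$ and using $\nu(\Phi(|A|)) = \nu(\Phi(|A^*|))$ (equality of singular-value distributions).

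\textbf{Main obstacle.} The principal technical point is verifying that the linear operators $T$ and $S$ satisfy the sublinearity hypothesis of Theorem \ref{th:Inter}; for linear maps this reduces to the distributional triangle inequality $\lambda_{2\alpha}(|A+B|) \le \lambda_{\alpha}(|A|) + \lambda_{\alpha}(|B|)$, which is standard in the semifinite setting. A secondary accounting step is the careful identification of the column/row embeddings $a = \sum_k x_k \otimes e_{k,0}$ with their moduli $|a| = S_C(x) \otimes e_{0,0}$, ensuring that the Orlicz-modular bound in $L_{\Phi}(\mathcal{N}_1)$ transfers to the intended bound on $\nu(\Phi(S_C))$ (and analogously for $S_R$).
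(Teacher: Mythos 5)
Your proof is correct and follows essentially the same route as the paper's: use Lust-Piquard's noncommutative Khintchine inequalities at two $L_p$-levels to establish strong-type bounds for a linear map between column/row subspaces of $L_p(\mathcal{N}\overline{\otimes}\mathcal{B}(\ell^2))$ and $L_p(L_\infty(\Omega)\overline{\otimes}\mathcal{N})$, lift to $\Phi$-moments via Theorem~\ref{th:Inter}, and finish Part~(1) with the operator triangle inequality (via isometries, Fack--Kosaki) plus $\Delta_2$. The one genuine difference is in Part~(2): the paper declares ``WLOG $\{\varepsilon_k\}$ is a basis of $L_p(\Omega,P)$'' and defines $S$ on the Rademacher span, then appeals directly to Khintchine for $p\ge 2$; you instead set $S(f)=\sum_k E(f\varepsilon_k)\otimes e_{k,0}$, which is a bona fide linear map on all of $L_p(\mathcal{N}_2)$, and prove its boundedness by duality against row matrices using Khintchine at the conjugate exponent $p'\in(1,2)$. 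Your version is arguably the tidier one, since the Rademacher span is a proper (complemented) subspace of $L_p(\Omega)$ rather than dense; your conditional-expectation definition makes the implicit precomposition with the Rademacher projection explicit and keeps the hypotheses of Theorem~\ref{th:Inter} transparently satisfied. A minor point to make fully rigorous: when concluding $\|Sf\|_p\le C\|f\|_p$ from the pairing bound, note that $Sf$ lies in the column subspace, so only the first row of a test functional $\phi$ contributes to $\langle Sf,\phi\rangle$, and the first-row truncation is contractive on $L_{p'}$; hence the supremum over all $\phi$ of norm one is attained on row matrices, which is exactly what your Khintchine estimate controls. The choice $1<p_0<p_\Phi\le q_\Phi<p_1<2$ in Part~(1) (where the paper takes $p_0=1$, $p_1=2$ directly) is immaterial, as Lust-Piquard holds for all $1\le p\le 2$.
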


\begin{proof} (1)\; We define $
T: L_p (\N \otimes {\mathcal{B}}(\el^{2})) \mapsto L_p ( \N \otimes
L_{\8} (\Omega, P))$
by
\be
T (a_{i j}) = \sum_i \varepsilon_i a_{i 1},\quad \forall (a_{ij}) \in L_p (\N \otimes {\mathcal{B}}(\el^{2})).
\ee
Since $L_p (\N, \el^2_C)$ is $1$-complemented in $L_p (\N \otimes {\mathcal{B}}(\el^{2})),$ by the noncommutative Khintchine inequalities \cite{LP1, LPP} we conclude that $T$ is bounded from $L_1 (\N \otimes {\mathcal{B}}(\el^{2}))$ into $L_1 ( \N \otimes L_{\8} (\Omega, P))$ and $L_2 (\N \otimes {\mathcal{B}}(\el^{2}))$ into $L_2 ( \N \otimes L_{\8} (\Omega, P))$ simultaneously. Then, by Theorem \ref{th:Inter} we have
\be
\int_{\Omega}
\nu \Big ( \Phi \Big [ \Big | \sum_{k=0}^{n}x_{k}\varepsilon_{k} \Big | \Big ] \Big ) d P
\leq  C_{\Phi} \nu \Big ( \Phi \Big [ \Big ( \sum_{k=0}^{n} |x_{k}|^{2} \Big )^{\frac{1}{2}} \Big ] \Big ).
\ee
Similarly, if we let
\be
T (a_{i j}) = \sum_j \varepsilon_j a_{1 j},\quad \forall (a_{ij}) \in L_p (\N \otimes {\mathcal{B}}(\el^{2})),
\ee
then we obtain
\be
\int_{\Omega}
\nu \Big ( \Phi \Big [ \Big | \sum_{k=0}^{n}x_{k}\varepsilon_{k} \Big | \Big ] \Big ) d P
\leq  C_{\Phi} \nu \Big ( \Phi \Big [ \Big ( \sum_{k=0}^{n} |x^*_{k}|^{2} \Big )^{\frac{1}{2}} \Big ] \Big ).
\ee
Hence, \eqref{khinp1} holds.

To prove the second inequality take a decomposition $x_k = y_k + z_k.$ Then there exist two isometries $U, V \in \mathcal{N}$ such that
\be \Big | \sum_{k=0}^{n}x_{k}\varepsilon_{k} \Big | \le  U^* \Big | \sum_{k=0}^{n} y_{k}\varepsilon_{k} \Big | U + V^* \Big | \sum_{k=0}^{n} z_{k}\varepsilon_{k} \Big | V. \ee Consequently, by Proposition 4.6 (ii) in \cite{FK} and \eqref{khinp1} we have
\begin{equation*}\label{khin1}
\begin{split}
\int_{\Omega} \nu \Big ( \Phi \Big [ & \Big | \sum_{k=0}^{n}x_{k}\varepsilon_{k} \Big | \Big ] \Big ) d P\\
& \le C_{\Phi} \Big \{ \int_{\Omega} \nu \Big ( \Phi \Big [ \Big | \sum_{k=0}^{n} y_{k}\varepsilon_{k} \Big | \Big ] \Big ) d P + \int_{\Omega} \nu \Big ( \Phi \Big [ \Big | \sum_{k=0}^{n} z_{k}\varepsilon_{k} \Big | \Big ] \Big ) d P \Big \}\\
& \le C_{\Phi} \inf \Big \{ \nu \Big ( \Phi \Big [ \Big ( \sum_{k=0}^{n}
|y_{k}|^{2} \Big )^{\frac{1}{2}} \Big ] \Big ) + \nu \Big ( \Phi \Big [ \Big ( \sum_{k=0}^{n}
|z_{k}^{*}|^{2} \Big )^{\frac{1}{2} } \Big ] \Big ) \Big \},
\end{split}
\end{equation*}
where we have used the fact that $\Phi \in \Delta_2$ in the first inequality.

(2)\; Without loss of generality, we assume that $\{\varepsilon_i\}$ is a (unconditional) basis in $L_p (\Omega, P)$ ($1 < p < \8$), i.e., $\mathrm{span}\{\varepsilon_i\}$ is dense in $L_p (\Omega, P).$ We let
\be
S \Big ( \sum_k x_k \varepsilon_k \Big ) = \left(
\begin{matrix} x_1 & 0 & 0 & \ldots \\
\vdots & \vdots & \vdots & \vdots \\
x_k & 0 & 0 & \ldots \\
\vdots & \vdots & \vdots & \ddots \end{matrix}\right)
\ee
for any finite sequence $\{x_k\}$ in $L_p (\N).$ By the noncommutative Khintchine inequalities \cite{LP1, LPP} we conclude that $S$ is well defined and extends to a bounded operator from $L_p ( \N \otimes L_{\8} (\Omega, P))$ into $L_p (\N \otimes {\mathcal{B}}(\el^{2}))$ for every $2 \le p < \8.$ Hence, by Theorem \ref{th:Inter} we have
\be
\nu \Big ( \Phi \Big [ \Big ( \sum_{k=0}^{n} |x_{k}|^{2} \Big )^{\frac{1}{2}} \Big ] \Big )
 \leq \int_{\Omega}
\nu \Big ( \Phi \Big [ \Big | \sum_{k=0}^{n}x_{k}\varepsilon_{k} \Big | \Big ] \Big ) d P.
\ee
Similarly, if we set
\be
S \Big ( \sum_k x_k \varepsilon_k \Big ) = \left(
\begin{matrix} x_1 & \ldots & x_k & \ldots \\
0 & \ldots & 0 & \ldots \\
\vdots & \vdots & \vdots & \ddots \end{matrix}\right)
\ee
for any finite sequence $\{x_k\}$ in $L_p (\N),$ then we have
\be
\nu \Big ( \Phi \Big [ \Big ( \sum_{k=0}^{n} |x_{k}^{*}|^{2} \Big )^{\frac{1}{2}} \Big ] \Big ) \leq
\int_{\Omega} \nu \Big ( \Phi \Big [ \Big | \sum_{k=0}^{n}x_{k}\varepsilon_{k} \Big | \Big ] \Big ) d P.
\ee
Hence, \eqref{khinp2} holds.
\end{proof}


As following is the noncommutative analogue of $\Phi$-moment version of Khintchine's inequalities for Rademacher's sequences.

\begin{theorem}\label{th:khin} Let $\Phi$ be an Orlicz function and $\{\varepsilon_i\}$ a Rademacher's sequence.
\begin{enumerate}[\rm (1)]

\item If $1<p_{\Phi} \le q_{\Phi}<2,$ then for any finite sequence $\{x_k\}$ in $L_{\Phi}(\N),$
\begin{equation}\label{khin1}
\begin{split}
\int_{\Omega} \nu \Big ( \Phi \Big [ & \Big | \sum_{k=0}^{n}x_{k}\varepsilon_{k} \Big | \Big ] \Big ) d P \\
& \approx \inf \Big \{ \nu \Big ( \Phi \Big [ \Big ( \sum_{k=0}^{n}
|y_{k}|^{2} \Big )^{\frac{1}{2}} \Big ] \Big )+ \nu \Big ( \Phi \Big [ \Big ( \sum_{k=0}^{n}
|z_{k}^{*}|^{2} \Big )^{\frac{1}{2} } \Big ] \Big ) \Big \},
\end{split}
\end{equation}
where the infimun runs over all decomposition $x_{k}=y_{k}+z_{k}$
with $y_{k}$ and $z_{k}$ in $L_{\Phi}({\mathcal{N}})$ and $``
\approx "$ depends only on $\Phi.$

\item If $2<p_{\Phi} \le q_{\Phi}<\infty,$ then for any finite sequence $\{x_k\}$ in $L_{\Phi}(\N),$
\begin{equation}\label{khin2}
\begin{split}
\int_{\Omega} \nu \Big ( \Phi \Big [ & \Big | \sum_{k=0}^{n}x_{k}\varepsilon_{k} \Big | \Big ] \Big ) d P\\
& \approx \max \Big \{ \nu \Big ( \Phi \Big [ \Big ( \sum_{k=0}^{n}
|x_{k}|^{2} \Big )^{\frac{1}{2} } \Big ] \Big ),\, \nu \Big ( \Phi \Big [ \Big ( \sum_{k=0}^{n}
|x_{k}^{*}|^{2} \Big )^{\frac{1}{2} } \Big ] \Big ) \Big \},
\end{split}
\end{equation}
where $`` \approx "$ depends only on $\Phi.$

\end{enumerate}
\end{theorem}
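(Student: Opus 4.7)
By Lemma \ref{le:khinp}, one direction of the required equivalence is already known in each case: the upper estimate $\int_\Omega \nu(\Phi[|\sum x_k\varepsilon_k|])\,dP \le C_\Phi \inf\{\cdots\}$ in case (1), and the lower estimate $\max\{\cdots\} \le \int_\Omega \nu(\Phi[|\sum x_k\varepsilon_k|])\,dP$ in case (2). The plan is to prove the \emph{reverse} inequalities by different methods: Riesz factorization on $\mathbb{T}$ for case (1), and direct interpolation via Theorem \ref{th:Inter} for case (2).

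For case (1), I would first transfer the Rademacher sum to a lacunary trigonometric series on $\mathbb{T}$. Using the noncommutative $L_p$-Khintchine inequality (with uniformly bounded constants for $p$ in any compact subinterval of $(1,\infty)$), Lemma \ref{le:Fourier}, and Theorem \ref{th:Inter}, one obtains the $\Phi$-moment equivalence
\[ \int_\Omega \tau(\Phi(|\textstyle\sum_k x_k \varepsilon_k|))\,dP \approx \int_\mathbb{T} \tau(\Phi(|\textstyle\sum_k x_k z^{3^k}|))\,dm. \]
Set $f = \sum_k x_k z^{3^k}$ and apply Lemma \ref{le:Riesz} to factor $f = g h$ with $g, h \in \mathcal{H}_{\Phi^{(2)}}$ satisfying $\int \nu(\Phi(|g|^2)) + \int \nu(\Phi(|h|^2)) \lesssim \int \nu(\Phi(|f|))$. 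Expanding $g = \sum_j \hat g(j) z^j$ and $h = \sum_l \hat h(l) z^l$ gives $x_k = \hat f(3^k) = \sum_{j+l=3^k} \hat g(j)\hat h(l)$; splitting at the midpoint $3^k/2$ produces a decomposition $x_k = y_k + z_k$, with $y_k = \sum_{j < 3^k/2} \hat g(j)\hat h(3^k-j) = \widehat{g \cdot \Delta_k h}(3^k)$ (where $\Delta_k$ is the block projection of Lemma \ref{le:Fourier}).

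To control the column square function of $(y_k)$, a matrix-valued Cauchy-Schwarz identifies $(\sum_k |y_k|^2)^{1/2}$ with a pointwise product involving $|g|$ and $(\sum_k |\Delta_k h|^2)^{1/2}$; combined with the Young-type inequality $\Phi(ab) \le \Phi^{(2)}(a) + \Phi^{(2)}(b)$ and the Paley estimate of Lemma \ref{le:Fourier} (which gives $\|(\sum_k |\Delta_k h|^2)^{1/2}\|_{\Phi^{(2)}} \lesssim \|h\|_{\Phi^{(2)}}$), this yields
\[ \nu\bigl(\Phi[(\textstyle\sum_k |y_k|^2)^{1/2}]\bigr) \lesssim \textstyle\int \nu(\Phi(|g|^2)) + \int \nu(\Phi(|h|^2)). \]
A symmetric argument bounds $\nu(\Phi[(\sum_k |z_k^*|^2)^{1/2}])$ using $(\sum_k |\Delta_k g|^2)^{1/2}$ and $|h|$. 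Lemma \ref{le:Riesz} then bounds the right-hand sides by $\int \nu(\Phi(|f|))$, completing the reverse inequality in case (1). The main obstacle is executing this matrix-valued Cauchy-Schwarz step in the operator-valued Orlicz setting, which requires carefully aligning the midpoint split of the Fourier convolution with the lacunary blocks of Lemma \ref{le:Fourier}.

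For case (2), consider the linear operator
\[ T: L_0(\mathcal{N} \otimes M_2 \otimes \mathcal{B}(\ell^2)) \to L_0(\mathcal{N} \otimes L_\infty(\Omega, P)), \quad T(A) = \sum_k \varepsilon_k\bigl((A_{11})_{k,1} + (A_{22})_{1,k}\bigr). \]
For $p \in (2, \infty)$, the Lust-Piquard noncommutative Khintchine inequality, together with the facts that cutting down to an $M_2$-block is a contraction (it is compression by a projection) and that extracting a first column or row of the $\mathcal{B}(\ell^2)$-factor is a contraction in $L_p$ (using the $L_p$-isometry property of transposition), shows $\|T\|_{L_p \to L_p} \lesssim C_p$. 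Choosing $2 < p_0 < p_\Phi \le q_\Phi < p_1 < \infty$ and applying Theorem \ref{th:Inter} yields $\int \nu(\Phi(|T(A)|))\,dP \le C_\Phi\, (\nu\otimes\tr\otimes\tr)(\Phi(|A|))$. Applied to the block-diagonal $A = \mathrm{diag}(C_0, R_0)$, where $C_0$ and $R_0$ are the column and row encodings of $(x_k)$, one has $T(A) = 2\sum_k \varepsilon_k x_k$ and $(\nu\otimes\tr\otimes\tr)(\Phi(|A|)) = \nu(\Phi(C)) + \nu(\Phi(R)) \le 2\max\{\nu(\Phi(C)),\nu(\Phi(R))\}$, which combined with $\Phi \in \Delta_2$ gives the desired upper bound.
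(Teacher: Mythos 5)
Your strategy for part (1) is the paper's own, but there is a sign error in the convolution split that breaks the key estimate. You set $y_k=\widehat{g\cdot\Delta_k h}(3^k)=\int_{\mathbb T}g(z)(\Delta_k h)(z)z^{-3^k}\,dm(z)$ and then claim the column square function of $(y_k)$ reduces, pointwise in $z$, to a product of $|g|$ and $\bigl(\sum_k|\Delta_k h|^2\bigr)^{1/2}$. That is a commutative identity; in the noncommutative setting one has
\begin{equation*}
\sum_k\bigl|g(\Delta_k h)\bigr|^2=\sum_k(\Delta_k h)^*\,|g|^2\,(\Delta_k h),
\end{equation*}
with the $k$-dependent factor $\Delta_k h$ on the \emph{outside} and $|g|^2$ trapped in the middle, so nothing factors. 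The paper instead cuts $g$: with $a_k=\widehat{(\Delta_k g)h}(3^k)$ the integrand satisfies $\sum_k|(\Delta_k g)h|^2=h^*\bigl(\sum_k|\Delta_k g|^2\bigr)h$, which genuinely factors because $h$ is fixed; the singular-number inequality $\mu_t\bigl((h^*Ah)^{1/2}\bigr)\le\mu_{t/2}(h)\,\mu_{t/2}(A^{1/2})$ and the Young estimate $\Phi(ab)\le\tfrac12\bigl(\Phi^{(2)}(a)+\Phi^{(2)}(b)\bigr)$ then apply, and Lemma \ref{le:Fourier} controls $\nu\bigl(\Phi(\sum_k|\Delta_kg|^2)\bigr)$. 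With your labels, $y_k=\widehat{g\cdot\Delta_k h}(3^k)$ naturally controls the \emph{row} square function (since $\sum_k|(g\Delta_k h)^*|^2=g\bigl(\sum_k(\Delta_k h)(\Delta_k h)^*\bigr)g^*$), while the other piece $\widehat{(\Delta_k g)h}(3^k)$ controls the column. So swap the roles of $g$ and $h$ (or of column and row) and this part is fine; as written it is incorrect.

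Part (2) has a genuine gap: the operator $T(A)=\sum_k\varepsilon_k\bigl((A_{11})_{k,1}+(A_{22})_{1,k}\bigr)$ is not bounded on $L_p$ for $p>2$. Column extraction $A_{11}\mapsto(c_k)$ controls only $\|(c_k)\|_{L_p(\ell^2_C)}$, and row extraction $A_{22}\mapsto(r_k)$ only $\|(r_k)\|_{L_p(\ell^2_R)}$; but the Lust-Piquard Khintchine inequality for $p\ge2$ bounds $\|\sum_k\varepsilon_k u_k\|_p$ by the \emph{maximum} of the column and row square-function norms of $(u_k)$, and you have no control over the missing norm for either piece. Concretely, take $\mathcal N=M_n$, $A_{11}=\sum_{k=1}^n e_{1k}\otimes E_{k1}$, $A_{22}=0$; then $\|A\|_p=n^{1/p}$ while $\bigl\|\sum_k\varepsilon_k e_{1k}\bigr\|_p\approx\sqrt n$ (the row square function dominates), so $\|T\|_{L_p\to L_p}\ge n^{1/2-1/p}\to\infty$ for $p>2$. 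This is exactly the obstruction the paper removes by first restricting to Hermitian $x_k$'s, for which $\sum_k|x_k|^2=\sum_k|x_k^*|^2$ so the column extraction operator is bounded on the Hermitian subspace of $L_p$ for $p\ge2$, and then reducing the general case via $x_k=y_k+\mathrm i z_k$ with $y_k,z_k$ Hermitian. You need this Hermitian reduction (or some equivalent device) before the interpolation via Theorem \ref{th:Inter} can be invoked.
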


\begin{proof} (1)\; By Lemma \ref{le:khinp} (1), we need only
to prove the lower estimate of \eqref{khin1}. By the Khintchine-Kahane inequality \cite{Pi} and Theorem \ref{th:Inter}, we are reduced to show for any finite sequence $\{x_k \}$ in $L_{\Phi}(\N),$
\begin{equation}\label{khin1'}
\begin{split}
\inf \Big \{ \nu \Big ( & \Phi \Big [ \Big ( \sum_{k=0}^{n} |y_{k}|^{2} \Big )^{ \frac{1}{2}} \Big ] \Big )+
\nu \Big ( \Phi \Big [ \Big ( \sum_{k=0}^{n} |z_{k}^{*}|^{2} \Big )^{\frac{1}{2} } \Big ] \Big ) \Big \}\\
& \leq A' \int_{\mathbb{T}} \nu \Big ( \Phi \Big [ \Big | \sum_{k=0}^{n}x_{k}z^{3^k} \Big | \Big ] \Big ) d m (z),
\end{split}
\end{equation}
where the infimun runs over all decomposition $x_{k}=y_{k}+z_{k}$ with $y_{k}$ and $z_{k}$ in $L_{\Phi}({\mathcal{N}}).$ To prove \eqref{khin1'}, we can clearly assume, by approximation, that
$\nu$ is finite, and even $\nu(1)=1.$ Thus, let $\{x_k \} \subset L_{\Phi}({\mathcal{M}})$ be a
fixed finite sequence, and set
$$
f(z)=\sum_{k=0}^{n}x_{k}z^{3^k},
$$
then $f\in \mathcal{H}_{\Phi}({\mathcal{N}})$.  Given $\varepsilon>0$ let $g$
and $h$ be the two functions in $\mathcal{H}_{\Phi^{(2)}}({\mathcal{N}})$
associated to $f$ and $\varepsilon$ as in Lemma \ref{le:Riesz}. Then for
any $k$
$$
\hat{f}(3^k)=\sum_{0\le m\le 3^k}\hat{g}(3^k-m)\hat{h}(m).
$$
Let
$$
a_k=\sum_{0\le m\le
\frac{3^k}{2}}\hat{g}(3^k-m)\hat{h}(m)\quad\mbox{and}\quad
b_k=\sum_{\frac{3^k}{2}< m\le 3^k}\hat{g}(3^k-m)\hat{h}(m).
$$
Thus we have a decomposition $x_k=a_k+b_k.$ It remains  to estimate
\be \nu \Big ( \Phi \Big [ \Big ( \sum_{k=0}^{n} |a_{k}|^{2} \Big )^{ \frac{1}{2}} \Big ] \Big )\; \text{and}\;
\nu \Big ( \Phi \Big [ \Big ( \sum_{k=0}^{n} |b_{k}^{*}|^{2} \Big )^{ \frac{1}{2}} \Big ] \Big ),\ee
respectively. To this end, let
$$
g_{k}(z)=\sum_{\frac{3^k}{2}< m\le 3^k}\hat{g}(m)z^m.
$$
Observe that
$$
a_k=\widehat{gh}(3^k)=\int_{\mathbb{T}}g_{k}(z)h(z)z^{-3^{k}}dm(z).
$$
Then, by the Jensen and H\"{o}lder inequalities we have
\be
\begin{split}
\nu \Big ( \Phi & \Big [ \Big ( \sum_{k=0}^{n} |a_{k}|^{2} \Big )^{ \frac{1}{2}} \Big ] \Big )\\
& =\nu \Big ( \Phi \Big [ \Big ( \sum_{k=0}^{n}
\Big |\int_{\mathbb{T}}g_{k}(z)h(z)z^{-3^{k}}d m (z) \Big |^{2} \Big )^{ \frac{1}{2}} \Big ] \Big )\\
& \le  \int_{\mathbb{T}} \nu \Big ( \Phi \Big [ \Big ( \sum_{k=0}^{n}
\big |g_{k}(z)h(z)z^{-3^{k}} \big |^{2} \Big )^{ \frac{1}{2}} \Big ] \Big ) d m(z)\\
& = \int_{\mathbb{T}} \nu \Big ( \Phi \Big [ \Big ( h(z)^{*}\sum_{k=0}^{n}
  g_{k}(z)^{*}g_{k}(z)h(z) \Big )^{ \frac{1}{2}} \Big ] \Big ) d m (z)\\
& = \int_{\mathbb{T}} \int_{0}^{\infty} \Phi \Big [ \mu_{t} \Big \{ \Big ( h(z)^{*}\sum_{k=0}^{n}
  g_{k}(z)^{*}g_{k}(z)h(z) \Big )^{ \frac{1}{2}} \Big \} \Big ] d t d m(z)\\
& \le \int_{\mathbb{T}}\int_{0}^{\infty} \Phi \Big [ \mu_{\frac{t}{2}} \big \{ h(z) \big \} \Big ( \mu_{\frac{t}{2}} \Big \{ \sum_{k=0}^{n}
  g_{k}(z)^{*}g_{k}(z) \Big \} \Big )^{ \frac{1}{2}} \Big ] d t d m (z)\\
& \le C_{\Phi} \int_{\mathbb{T}}\int_{0}^{\infty} \Phi \Big [ \mu_{\frac{t}{2}} \big \{ h(z) \big \}^2 + \mu_{\frac{t}{2}} \Big \{ \sum_{k=0}^{n} g_{k}(z)^{*}g_{k}(z) \Big \}  \Big ] d t d m (z)\\
& \le C_{\Phi} \Big \{ \int_{\mathbb{T}} \int_{0}^{\infty} \Phi \big [ \mu_{t} ( |h(z)|^2 ) \big ] d t d m (z) \\
& \quad + \int_{\mathbb{T}} \int_{0}^{\infty} \Phi \Big [ \mu_{t} \Big ( \sum_{k=0}^{n} g_{k}(z)^{*}g_{k}(z) \Big ) \Big ] d t d m (z) \Big \}\\
& \le C_{\Phi} \Big \{ \int_{\mathbb{T}} \nu \big [ \Phi ( |h(z)|^{2} ) \big ] d m + \int_{\mathbb{T}} \nu \Big ( \Phi \Big [ \sum_{k=0}^{n} g_{k}(z)^{*}g_{k}(z) \Big ] \Big ) d m \Big \}.
\end{split}
\ee
Now let $I_{k}=\{m\in \mathbb{Z}\;:\;\frac{3^k}{2}< m\le 3^k\}.$ Then by Lemma \ref{le:Fourier} we have
\be\begin{split}
\int_{\mathbb{T}} \nu \Big ( \Phi \Big [ \sum_{k=0}^{n} g_{k}(z)^{*}g_{k}(z) \Big ] \Big ) d m & = \int_{\mathbb{T}} \nu \Big ( \Phi^{(2)} \Big [ \Big ( \sum_{k=0}^{n} g_{k}(z)^{*}g_{k}(z)\Big )^{1/2} \Big ] \Big ) d m\\
& \le C_{\Phi} \int_{\mathbb{T}} \nu \Big ( \Phi^{(2)} \big [ |g(z)| \big ] \Big )d m\\
& = C_{\Phi} \int_{\mathbb{T}} \nu \Big ( \Phi \big [ |g(z)|^{2}\big ] \Big )d m,
\end{split}\ee
since $1<2 p_{\Phi} \le p_{\Phi^{(2)}} \le q_{\Phi^{(2)}} \le 2 q_{\Phi}<\8.$
Thus, we deduce that
\be
\begin{split}
\nu \Big ( \Phi \Big [ \Big ( \sum_{k=0}^{n} |a_{k}|^{2} \Big )^{ \frac{1}{2}} \Big ] \Big )
\le C_{\Phi} \Big ( \int_{\mathbb{T}} \tau \Big ( \Phi \big [|f| \big ] \Big ) d m + \varepsilon \Big ).
\end{split}
\ee
Similarly, we have
\be
\nu \Big ( \Phi \Big [ \Big ( \sum_{k=0}^{n} |b^*_{k}|^{2} \Big )^{ \frac{1}{2}} \Big ] \Big )
\le C_{\Phi} \Big ( \int_{\mathbb{T}} \tau \Big ( \Phi \big [ |f| \big ] \Big ) d m + \varepsilon \Big ).
\ee
Hence, we obtain \eqref{khin1'} and complete the proof of (1).

(2)\; The upper estimate of \eqref{khin2} immediately follows from
Lemma \ref{le:khinp} (2). To prove the upper estimate of \eqref{khin2}, we consider first the case of that
$x_0,x_1,\cdots,x_n$ are Hermitian operators in
$L_{\Phi} ( \mathcal{N}).$ Define $T$ as in the proof of Lemma \ref{le:khinp} (1). By the noncommutative Khintchine's inequality \cite{LPP} (also see \cite{LP1}) and the fact that $L_p ({\mathcal{N}},\el_{C}^{2})_{\mathrm{Her}}$ is also $1$-complemented in $L_p (\N \otimes \mathcal{B}(\el_2))_{\mathrm{Her}}$ (i.e., $(a_{ij}) \in L_p (\N \otimes \mathcal{B}(\el_2))_{\mathrm{Her}}$ if $a_{ij}$'s are Hermitian operators and $(a_{ij}) \in L_p (\N \otimes \mathcal{B}(\el_2))$), we obtain that $T$ is bounded from $L_p (\N \otimes \mathcal{B}(\el_2))_{\mathrm{Her}}$ into $L_p (\N \otimes L_p (\Omega, P))_{\mathrm{Her}}$ for $2 \le p < \8.$
Consequently, by Theorem \ref{th:Inter} (see Remark \ref{re:Inter} (2) there) there exists a constant $C_{\Phi}$
such that
\be
\int_{\Omega} \nu \Big ( \Phi \Big [ \Big | \sum_{k=0}^{n}x_{k}\varepsilon_{k} \Big | \Big ] \Big ) d P \leq
 C_{\Phi} \nu \Big ( \Phi \Big [ \Big ( \sum_{k=0}^{n} x_{k}^{2} \Big )^{ \frac{1}{2}} \Big ] \Big ).
\ee
The general case follows from the above special case. Indeed, let
$x_{k}=y_{k} + \mathrm{i} z_{k}\;(1\le k\le n),$ where $y_{k},z_{k}$ are
Hermitian operators. Since
\be
y_{k}^{2}+ z_{k}^{2} = \frac{1}{2}\big [ x_{k}^{*}x_{k}+x_{k}x_{k}^{*} \big ],
\ee
we have
\be
\sum_{k=0}^{n} y_{k}^{2} \le\frac{1}{2} \sum_{k=1}^{n} \big [ x_{k}^{*}x_{k}+x_{k}x_{k}^{*} \big ]\; \text{and}\;
\sum_{k=0}^{n}z_{k}^{2} \le \frac{1}{2} \sum_{k=1}^{n} \big [ x_{k}^{*}x_{k}+x_{k}x_{k}^{*} \big ].
\ee
Hence,
\be
\begin{split}
\int_{\Omega} & \nu \Big ( \Phi \Big [ \Big | \sum_{k=0}^{n}x_{k}\varepsilon_{k} \Big | \Big ] \Big ) d P\\
& = \int_{\Omega} \int_{0}^{\infty} \Phi \Big [ \mu_{t} \Big ( \Big | \sum_{k=0}^{n}x_{k}\varepsilon_{k} \Big | \Big ) \Big ] d t d P\\
& \le \int_{\Omega} \int_{0}^{\infty} \Phi \Big [ \mu_{t} \Big ( \Big | \sum_{k=0}^{n}y_{k}\varepsilon_{k} \Big | \Big ) +
\mu_{t} \Big ( \Big | \sum_{k=0}^{n}z_{k}\varepsilon_{k} \Big | \Big ) \Big ] d t d P\\
& \le \frac{1}{2} \int_{\Omega} \int_{0}^{\infty} \Big \{ \Phi \Big [ 2 \mu_{t} \Big ( \Big | \sum_{k=0}^{n}y_{k}\varepsilon_{k} \Big | \Big ) \Big ] + \Phi \Big [ 2
\mu_{t} \Big ( \Big | \sum_{k=0}^{n}z_{k}\varepsilon_{k} \Big | \Big ) \Big ] \Big \} d t d P\\
& \le C_{\Phi} \int_{\Omega} \int_{0}^{\infty} \Big \{ \Phi \Big [ \mu_{t} \Big ( \Big | \sum_{k=0}^{n}y_{k}\varepsilon_{k} \Big | \Big ) \Big ] + \Phi \Big [
\mu_{t} \Big ( \Big | \sum_{k=0}^{n}z_{k}\varepsilon_{k} \Big | \Big ) \Big ] \Big \} d t d P\\
& \le C_{\Phi} \Big \{ \nu \Big ( \Phi \Big [ \Big ( \sum_{k=0}^{n}
y_{k}^{2} \Big )^{ \frac{1}{2}} \Big ] \Big ) + \nu \Big ( \Phi \Big [ \Big ( \sum_{k=0}^{n}
z_{k}^{2} \Big )^{ \frac{1}{2}} \Big ] \Big ) \Big \}\\
& \le C_{\Phi} \nu \Big ( \Phi \Big [ \Big ( \frac{1}{2}\sum_{k=1}^{n}[x_{k}^{*}x_{k}+x_{k}x_{k}^{*} ] \Big )^{ \frac{1}{2}} \Big ] \Big )\\
& = C_{\Phi} \int_{0}^{\infty} \Phi \Big [ \Big ( \mu_{t} \Big \{ \frac{1}{2} \sum_{k=1}^{n}[x_{k}^{*}x_{k}+x_{k}x_{k}^{*}] \Big \} \Big )^{ \frac{1}{2}} \Big ] dt\\
& \le C_{\Phi} \int_{0}^{\infty} \Phi \Big [ \Big ( \mu_{t} \Big \{ \frac{1}{2} \sum_{k=1}^{n}x_{k}^{*}x_{k} \Big \} + \mu_{t} \Big \{ \frac{1}{2} \sum_{k=1}^{n}x_{k}x_{k}^{*} \Big \} \Big )^{1/2} \Big ] d t\\
& \le C_{\Phi} \max \Big \{ \nu \Big ( \Phi \Big [ \Big ( \sum_{k=0}^{n}
|x_{k}|^{2} \Big )^{ \frac{1}{2}} \Big ] \Big ),\, \nu \Big ( \Phi \Big [ \Big ( \sum_{k=0}^{n}
|x_{k}^{*}|^{2} \Big )^{ \frac{1}{2}} \Big ] \Big ) \Big \},
\end{split}\
\ee
where $C_{\Phi}$'s may be different in different lines. This completes the proof.
\end{proof}

\begin{remark}
Note that Khintchine's inequality is valid for $L_1$-norm in both commutative and noncommutative settings (cf., \cite{LPP}).
We could conjecture that the right condition in Theorem \ref{th:khin} (1) should be $q_{\Phi} <2$ without the additional restriction condition $1<p_{\Phi}.$
However, our argument seems to be inefficient in this case. We need new ideas to approach it.
\end{remark}

\section{$\Phi$-moment Burkholder-Gundy's inequalities}\label{BG}

Now, we are in a position to state and prove the $\Phi$-moment version of noncommutative Burkholder-Gundy martingale inequalities.

\begin{theorem}\label{th:BG}
Let $\Phi$ be an Orlicz function and $x=(x_{n})_{n\geq 0}$ a noncommutative $L_{\Phi}$-martingale.
\begin{enumerate}[\rm (1)]

\item If $1<p_{\Phi} \le q_{\Phi}<2,$ then
\begin {equation}\label{eq:BG1}
\begin{split}
\tau \big ( \Phi [ |x|] \big ) \approx \inf \Big \{ \tau \Big ( \Phi \Big [ \Big ( \sum_{k= 0 }^{\infty} |dy_{k}|^{2} \Big )^{ \frac{1}{2}} \Big ] \Big ) + \tau \Big ( \Phi \Big [ \Big ( \sum_{k= 0 }^{\infty} |dz_{k}^{*}|^{2} \Big )^{ \frac{1}{2}} \Big ] \Big ) \Big \}
\end{split}
\end{equation}
where the infimum runs over all decomposition $x_{k}=y_{k}+z_{k}$
with $y_{k}$ in $\mathcal{H}_{C}^{\Phi}({\mathcal{M}})$ and $z_{k}$ in
$\mathcal{H}_{R}^{\Phi}({\mathcal{M}})$ and ``$\approx$" depends only on $\Phi.$

\item If $2 <p_{\Phi} \le q_{\Phi}<\infty,$ then
\begin {equation}\label{eq:BG2}
\tau \big ( \Phi [ |x|] \big ) \approx \max \Big \{ \tau \Big ( \Phi \Big [ \Big ( \sum_{k= 0 }^{\infty} |dx_{k}|^{2} \Big )^{ \frac{1}{2}} \Big ] \Big ),\; \tau \Big ( \Phi \Big [ \Big ( \sum_{k= 0 }^{\infty} |dx_{k}^{*}|^{2} \Big )^{ \frac{1}{2}} \Big ] \Big ) \Big \},
\end{equation}
where ``$\approx$" depends only on $\Phi.$


\end{enumerate}
\end{theorem}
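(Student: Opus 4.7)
The strategy is the one sketched in the introduction: one transfers the problem from the martingale $x$ to the Rademacher average $\sum_k \varepsilon_k dx_k$ via the martingale-transform invariance of Corollary \ref{cor:MarDiff}, then invokes the $\Phi$-moment Khintchine inequalities of Theorem \ref{th:khin}. Theorem \ref{th:Stein} enters only in part (1), where it is used to project a general Khintchine decomposition onto a genuine martingale decomposition.

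Let $(\varepsilon_k)$ be a Rademacher sequence on a probability space $(\Omega,P)$. By Corollary \ref{cor:MarDiff}, with constants depending only on $\Phi$ and uniformly in $\varepsilon \in \{\pm 1\}^{\mathbb{N}}$,
\[
\tau\bigl(\Phi(|x|)\bigr) \approx \tau\Bigl(\Phi\Bigl(\Bigl|\sum_{k \ge 0} \varepsilon_k\, dx_k\Bigr|\Bigr)\Bigr),
\]
so that, integrating against $dP$,
\[
\tau\bigl(\Phi(|x|)\bigr) \approx \int_\Omega \tau\Bigl(\Phi\Bigl(\Bigl|\sum_k \varepsilon_k(\omega)\, dx_k\Bigr|\Bigr)\Bigr)\, dP(\omega).
\]
In case (2), Theorem \ref{th:khin}(2) applied to $(dx_k) \subset L_\Phi(\M)$ equates the right-hand side with the maximum on the right of \eqref{eq:BG2}, proving (2). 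In case (1), Theorem \ref{th:khin}(1) equates it with
\[
\inf\Bigl\{\tau\bigl(\Phi\bigl[(\sum_k |a_k|^2)^{1/2}\bigr]\bigr) + \tau\bigl(\Phi\bigl[(\sum_k |b_k^*|^2)^{1/2}\bigr]\bigr)\Bigr\},
\]
the infimum being over \emph{all} decompositions $dx_k = a_k + b_k$ in $L_\Phi(\M)$, not necessarily martingale differences. Every martingale decomposition $x_k = y_k + z_k$ appearing on the right of \eqref{eq:BG1} produces the admissible splitting $a_k = dy_k$, $b_k = dz_k$, so the above Khintchine infimum is dominated by the one in \eqref{eq:BG1}; this yields one half of (1), namely $\tau(\Phi(|x|)) \le C_\Phi \cdot \text{RHS of }\eqref{eq:BG1}$.

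The reverse estimate in (1) is the main difficulty: starting from an arbitrary Khintchine decomposition $dx_k = a_k + b_k$ nearly attaining the infimum, I must produce a \emph{martingale} decomposition $x = y + z$ with $y \in \mathcal{H}_C^\Phi(\M)$, $z \in \mathcal{H}_R^\Phi(\M)$ and comparable square functions. I set
\[
dy_k := \E_k a_k - \E_{k-1} a_k, \qquad dz_k := \E_k b_k - \E_{k-1} b_k \qquad (k \ge 1),
\]
with $dy_0 := \E_0 a_0$, $dz_0 := \E_0 b_0$. Using $\E_k dx_k = dx_k$ and $\E_{k-1} dx_k = 0$ for $k \ge 1$, one verifies these are genuine martingale differences and that $dy_k + dz_k = dx_k$. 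From the operator inequality $|dy_k|^2 \le 2(|\E_k a_k|^2 + |\E_{k-1} a_k|^2)$, operator monotonicity of the square root, subadditivity of the generalised singular numbers $\mu_t$, and the $\Delta_2$ hypothesis on $\Phi$, one obtains
\[
\tau\bigl(\Phi[S_C(y)]\bigr) \le C_\Phi \Bigl(\tau\bigl(\Phi\bigl[(\sum_k |\E_k a_k|^2)^{1/2}\bigr]\bigr) + \tau\bigl(\Phi\bigl[(\sum_k |\E_{k-1} a_k|^2)^{1/2}\bigr]\bigr)\Bigr).
\]
Theorem \ref{th:Stein} bounds the first term by $C_\Phi \tau\bigl(\Phi[(\sum_k |a_k|^2)^{1/2}]\bigr)$; the second reduces to the same estimate via the re-indexing $\tilde a_k := a_{k+1}$, which converts $\E_{k-1} a_k$ into $\E_{k-1}\tilde a_{k-1}$ and reduces the bound to Stein applied to $(\tilde a_k)$. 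The argument for $z$ is identical on the row side, and taking infima completes \eqref{eq:BG1}.

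The genuinely nontrivial step is thus the martingale-projection in (1); it works precisely because Theorem \ref{th:Stein} applies to arbitrary (non-adapted) sequences and is invariant under a single shift of indices. Every other ingredient is a direct application of a $\Phi$-moment inequality already proved in the paper.
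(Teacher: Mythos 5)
Your proof follows the paper's route exactly: reduce to the Rademacher average via Corollary~\ref{cor:MarDiff} and integration over $\Omega$, apply the $\Phi$-moment Khintchine inequalities of Theorem~\ref{th:khin}, and in case~(1) use Theorem~\ref{th:Stein} to pass from an arbitrary decomposition $dx_k = a_k + b_k$ to a genuine martingale decomposition. The paper compresses this last Stein-projection step into a single sentence, and your write-up fills in those omitted details correctly (including the index shift needed for the $\E_{k-1}a_k$ term).
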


\begin{proof}
(1)\; Let $x$ be any finite martingale in
$L_{\Phi}({\mathcal{M}})$ and $(\varepsilon_{n})$ a Rademacher
sequence on a probability space $(\Omega,P).$ Then, by
\eqref{int-diff} we have
\be
\tau \Big ( \Phi \Big [ \Big | \sum \varepsilon _{n}dx_{n} \Big | \Big ] \Big ) \approx
\tau \Big ( \Phi \Big [ \Big | \sum dx_{n} \Big | \Big ] \Big ).
\ee
Therefore, integrating on $\Omega$ we have
\begin{equation}\label{eqiv}
\tau \Big ( \Phi \Big [ \Big | \sum dx_{n} \Big | \Big ] \Big ) \approx
\int_{\Omega} \tau \Big ( \Phi \Big [ \Big | \sum \varepsilon _{n}dx_{n} \Big | \Big ] \Big ) d P.
\end{equation}
It follows from Theorem \ref{th:khin} (1) that
\be
\tau \Big ( \Phi \Big [ \Big | \sum dx_{n} \Big | \Big ] \Big ) \approx
\inf \Big \{ \tau \Big ( \Phi \Big [ \Big ( \sum_{k=0}^{n} |d y_{k}|^{2} \Big )^{ \frac{1}{2}} \Big ] \Big )+
\tau \Big ( \Phi \Big [  \Big ( \sum_{k=0}^{n} |d z_{k}^{*}|^{2} \Big )^{ \frac{1}{2}} \Big ] \Big ) \Big \},
\ee
where the infimun runs over all decomposition $x_{k}=y_{k}+z_{k}$
with $y_{k}$ and $z_{k}$ in $L_{\Phi}({\mathcal{N}}).$ Then, using
Theorem \ref{th:Stein} we get \eqref{eq:BG1}.

(2)\; Similarly, using \eqref{eqiv} and Theorem \ref{th:khin} (2) we obtain the desired result \eqref{eq:BG2}.
\end{proof}

As follows, we give two examples for illustrating the $\Phi$-moment version of noncommutative Burkholder-Gundy's inequalities obtained above.

\begin{example}\label{ex:pleq}
Let $\Phi (t) = t^a \ln (1 + t^b)$ with $a > 1$ and $b >0.$ It is easy to check that $\Phi$ is an Orlicz function and
\be
p_{\Phi} = a\quad \text{and}\quad q_{\Phi} = a + b.
\ee
When $1< a < a+b<2,$ we have \eqref{eq:BG1}, while $a > 2$ we have \eqref{eq:BG2}. However, when $1<a \le 2 \le a + b$ Theorem \ref{th:BG} gives no information.
\end{example}

\begin{example}\label{ex:p=q=2}
Let $\Phi (t) = t^p (1 + c \sin(p \ln t))$ with $p > 1/(1-2c)$ and $0< c <1/2.$ Then, $\Phi$ is an Orlicz function and
\be
p_{\Phi} = q_{\Phi} = p.
\ee
When $0< c < 1/4,$ $p_{\Phi} = q_{\Phi} =2$ occurs. In this case, Theorem \ref{th:BG} gives no information yet. However, $\Phi$ is equivalent to $t^p$ and so the corresponding Burkholder-Gundy's inequality holds. On the other hand, in general $p_{\Phi} = q_{\Phi} =p$ does not imply that $\Phi$ is equivalent to $t^p$ (see \cite{M1, M2} for details).
\end{example}

\section{Remarks}\label{re}

In this section, we make some remarks on our results and possible further researches.

(1)\; As indicated in Examples \ref{ex:pleq} and \ref{ex:p=q=2}, $\Phi$-moment Burkholder-Gundy's inequalities of noncommutative martingales in the cases of $1< p_{\Phi} \le 2 \le q_{\Phi}< \8$ remain open. Our interpolation argument seems to be inefficient to approach them. (It is clear that our argument is efficient for all the case $1< p_{\Phi} \le q_{\Phi}< \8$ in the commutative setting.) On the other hand, one encounters some substantial
difficulties in trying to adapt the classical techniques, which used stopping times, to the noncommutative setting. As a good substitute for stopping times, Cuculescu's projections \cite{Cu} played an important role for establishing weak-type inequalities \cite{R3, R4} and a noncommutative analogue of the Gundy¡¯s decomposition \cite{PR}. However, these projections do not seem to be powerful enough for noncommutative $\Phi$-moment inequalities (see also \cite{BCPY} for the noncommutative atomic decomposition and \cite{P} for the noncommutative Davis' decomposition). We need new ideas beyond interpolation and Cuculescu's projections.

(2)\; In \cite{BDG}, the authors proved the following $\Phi$-moment martingale inequality: Let $(\Omega, \mathcal{F}, P)$ be a probability space and $\{\mathcal{F}_k\}$ a increasing sequence of $\sigma$-subfields of $\mathcal{F}.$ If $\Phi$ is an Orlicz function satisfying $\Delta_2$-condition, then for any sequence $\{f_k\}$ of nonnegative $\mathcal{F}$-measurable functions
\beq\label{eq:DoobDual}
\mathbb{E} \Phi \Big ( \sum_k \mathbb{E}[f_k | \mathcal{F}_k] \Big ) \le C_{\Phi} \mathbb{E} \Phi \Big ( \sum_k f_k \Big ).
\eeq
(See also \cite{G} for an another proof.) Stopping times and good-$\lambda$ techniques developed by Burkholder {\it etal} \cite{Burk} are two key ingredients in the proof of \eqref{eq:DoobDual}. In $L_p$-cases, \eqref{eq:DoobDual} is the so-called dual version of Doob's maximal inequality. The noncommutative analogue of \eqref{eq:DoobDual} in the $L_p$-case plays a crucial role in Junge's approach \cite{J} to noncommutative Doob's inequality. Unfortunately, our interpolation argument is unavailable in the approach to \eqref{eq:DoobDual} in noncommutative setting for Orlicz functions. As expected, the good-$\lambda$ techniques in the noncommutative setting should be developed and it might be efficient for this goal.

(3)\; We end the paper with a note on $\Phi$-moment inequalities on the conditioned square function $\sigma (f) = \big ( \sum_n \mathbb{E}_{n-1} [|d f_n|^2] \big )^{1/2}$ and maximal function $f^* = \sup_n | f_n|$ for a martingale $f=\{f_n\}.$ Let us recall the $\Phi$-moment version of the classical Burkholder-Davis-Gundy theorem for martingales (see \cite{BDG}): Let $\Phi$ be an Orlicz function satisfying the $\Delta_2$-condition. Then
\beq\label{eq:BDG1}
\mathbb{E}\Phi (f^*) \approx \mathbb{E} \Phi \big [ S(f) \big ]\;\text{with}\; S(f) = \Big ( \sum_n | d f_n |^2 \Big )^{1/2},
\eeq
for all martingales $f,$ where $`` \approx "$ depends only on $\Phi.$ The noncommutative case is surprisingly differen as noted in \cite{JX3}. Indeed, it was shown in \cite{JX3}, Corollary 14, that \eqref{eq:BDG1} does not hold for $\Phi (t) =t$ in general. Instead, a noncommutative analogue of
\beq\label{eq:BDG2}
\mathbb{E} [ S(f) ] \approx \inf \Big \{ \mathbb{E} [ \sigma (g)] + \mathbb{E} \Big ( \sum_n |d h_n| \Big ) \Big \},
 \eeq
 holds as shown in \cite{P}, where the infimum runs over all decompositions $f = g + h$ with $g, h$ being two martingales adapted to the same filtration. Motivated by this result and the commutative case, we would carry out a noncommutative analogue of the $\Phi$-moment version of \eqref{eq:BDG2} elsewhere \cite{BC}. Again, the interpolation argument will play a key role in this problem.


\end{document}